\newtheorem{thm}{Theorem}[section]
\newtheorem{cor}[thm]{Corollary}
\newtheorem{lem}[thm]{Lemma}
\newtheorem{exm}[thm]{Example}
\newtheorem{prop}[thm]{Proposition}
\theoremstyle{definition}
\theoremstyle{remark}
\newtheorem{rem}[thm]{\bf Remark}
\numberwithin{equation}{section}
\begin{document}

\title[Hereditary triangulated categories]{Hereditary triangulated categories}

\author[Xiao-Wu Chen, Claus Michael Ringel] {Xiao-Wu Chen, Claus Michael Ringel}

\subjclass[2010]{18E30, 18E10, 13D09}
\date{\today}

\keywords{hereditary abelian category, piecewise hereditary algebra,  $t$-structure, realization functor, path}%

\maketitle

\dedicatory{}%
\commby{}%
%\begin{center}
%\end{center}

\begin{abstract}
We call a triangulated category  \emph{hereditary} provided that it is equivalent to the bounded derived category of a hereditary abelian category, where the equivalence is required to commute with the translation functors. If the triangulated category is algebraical, we may replace the equivalence by a triangle equivalence. We give two intrinsic characterizations of hereditary triangulated categories using a certain full subcategory and the non-existence of certain paths. We apply them to piecewise hereditary algebras.
\end{abstract}

\section{Introduction}

Hereditary abelian categories play a central role in the representation theory of finite dimensional algebras. They are related to piecewise hereditary algebras, an important class of algebras. If the ground field is algebraically closed and the hereditary abelian category has a tilting object, then up to derived equivalence, it is the module category of a path algebra or the category of coherent sheaves on a weighted projective line; see \cite{Hap2}.

We aim to characterize the bounded derived category of a hereditary abelian category among arbitrary triangulated categories. These triangulated categories should be called hereditary. More precisely, we call a triangulated category $\mathcal{D}$ \emph{hereditary} provided that there is an equivalence $F$ between $\mathcal{D}$ and the bounded derived category of a hereditary abelian category, where $F$ is required to commute with the translation functors. A prior the equivalence $F$ may not be a triangle equivalence. However, if the triangulated category $\mathcal{D}$ is \emph{algebraical}, that is, triangle equivalent to the stable category of a Frobenius category, we can replace the equivalence $F$ by a triangle equivalence.

The main results are two intrinsic characterizations of  hereditary triangulated categories; see Theorems \ref{thm:A} and \ref{thm:B}: one uses a certain full subcategory  in the triangulated category,  and the other uses the non-existence of a certain path in the triangulated category. These results  give new characterizations to piecewise hereditary algebras.

The paper is structured as follows. In Section 2, we characterize hereditary triangulated categories using hereditary $t$-structures and prove Theorem \ref{thm:A}. In Section 3, we prove that if the given triangulated category is algebraical, then the equivalence $F$ mentioned above might be replaced by a triangle equivalence; see Theorem \ref{thm:A.1}. This relies on an existence result in \cite{Bei} on the realization functor for a given $t$-structure.  In Section 4, we study paths in a triangulated category. If the category is a block, that is, indecomposable as a triangulated category, the existence of a certain path is proved in Proposition \ref{prop:path}.  In Section 5, we prove Theorem \ref{thm:B}. We give some applications to piecewise hereditary algebras in the end.

\section{Hereditary triangulated categories}

In this section, we give various characterizations to hereditary triangulated categories. In particular, a triangulated category is hereditary if and only if it has a hereditary $t$-structure.

\subsection{Hereditary $t$-structures}

Let $\mathcal{D}$ be a triangulated category with its translation functor denoted by $[1]$. We denote by $[-1]$ the quasi-inverse of $[1]$.  For two full subcategories $\mathcal{U}$ and $\mathcal{V}$, we denote by $\mathcal{U}\star\mathcal{V}$  the full subcategory consisting of those objects $X$ that fit into an exact triangle $U\rightarrow X\rightarrow V\rightarrow U[1]$ with $U\in \mathcal{U}$ and $V\in \mathcal{V}$. The operation  $\star$ is associative; see \cite[Subsection 1.3.9]{BBD}.

 Recall from \cite[Section 1.3]{BBD} that a {\em $t$-structure} on $\mathcal{D}$ is a pair $(\mathcal{D}^{\leq
0},
\mathcal{D}^{\geq 0})$ of full additive subcategories  satisfying the following conditions:
\begin{enumerate}
\item[(T1)] ${\rm Hom}_\mathcal{D}(X, Y[-1])=0$ for all $X \in
\mathcal{D}^{\leq 0}$ and $Y \in \mathcal{D}^{\geq 0}$;
\item[(T2)] $\mathcal{D}^{\leq 0}$ is closed under $[1]$, and
$\mathcal{D}^{\geq 0}$ is closed under $[-1]$;
\item[(T3)] For each $X \in \mathcal{D}$, there is an exact triangle
$A \rightarrow X \rightarrow B[-1] \rightarrow A[1]$ with $A\in
\mathcal{D}^{\leq 0}$ and $B\in \mathcal{D}^{\geq 0}$.
\end{enumerate}
Set $\mathcal{A}=\mathcal{D}^{\leq 0}\cap\mathcal{D}^{\geq 0}$ to be the
{\em heart} of the $t$-structure, which is
an abelian category. Moreover, a sequence $\xi\colon 0\rightarrow X \stackrel{f}\rightarrow Y\stackrel{g}\rightarrow Z\rightarrow 0$ in $\mathcal{A}$  is exact if and only if there is an exact triangle  $X \stackrel{f}\rightarrow Y\stackrel{g}\rightarrow Z\stackrel{\omega} \rightarrow X[1]$ in $\mathcal{D}$. Indeed, the triangle is unique, since such a morphism $\omega$ is uniquely determined by $f$ and $g$. Then we have an induced isomorphism
\begin{align}\label{equ:iso1}
{\rm Ext}_\mathcal{A}^1(Z, X)\longrightarrow {\rm Hom}_\mathcal{D}(Z, X[1]), \quad [\xi]\mapsto \omega.
\end{align}
Here, $[\xi]$ denotes the equivalence class of $\xi$ in ${\rm Ext}_\mathcal{A}^1(Z, X)$. We refer to \cite[Th\'{e}or\`{e}me 1.3.6]{BBD} for details.

Let $(\mathcal{D}^{\leq 0}, \mathcal{D}^{\geq 0})$ be a $t$-structure on $\mathcal{D}$. Set $\mathcal{D}^{\leq n}=\mathcal{D}^{\leq 0}[-n]$ and
$\mathcal{D}^{\geq n}=\mathcal{D}^{\geq 0}[-n]$, $n \in \mathbb{Z}$.
Recall that the {\em truncation
functors} $\tau_{\leq 0}: \mathcal{D} \longrightarrow
\mathcal{D}^{\leq 0}$ and $\tau_{\geq 1}: \mathcal{D}
\longrightarrow \mathcal{D}^{\geq
1}$ are the right adjoint and the left adjoint of the inclusion functors ${\rm inc}\colon \mathcal{D}^{\leq 0}\rightarrow \mathcal{D}$ and
${\rm inc}\colon \mathcal{D}^{\geq 1}\rightarrow \mathcal{D}$, respectively.

In general, one defines $\tau_{\leq n} :\mathcal{D} \longrightarrow
\mathcal{D}^{\leq n}$ and $\tau_{\geq n+1}: \mathcal{D}
\longrightarrow \mathcal{D}^{\geq n+1} $ by $\tau_{\leq n}=
[-n]\circ \tau_{\leq 0}\circ [n]$ and $\tau_{\geq n+1}=[-n]\circ
\tau_{\geq 1} \circ [n]$, respectively. Then $\tau_{\leq n}$ and $\tau_{\geq n+1}$ coincide with the truncation functors associated to the shifted $t$-structure $(\mathcal{D}^{\leq n}, \mathcal{D}^{\geq n})$.  In particular, for each object $X$, the triangle in (T3) yields an exact triangle
\begin{align}\label{equ:tri1}
\tau_{\leq n}X \longrightarrow X \longrightarrow \tau_{\geq n+1}X \stackrel{c}\longrightarrow
(\tau_{\leq n}X)[1],
\end{align}
where the morphisms yield natural transformations between functors.  The $n$-th \emph{cohomological functor} $H^n\colon \mathcal{D}\rightarrow \mathcal{A}$ is defined to be
$H^n(X)=(\tau_{\geq n}\tau_{\leq n})(X)[n]$. We observe that $H^n(X)\simeq H^0(X[n])$.

The $t$-structure $(\mathcal{D}^{\leq 0},
\mathcal{D}^{\geq 0})$  is called {\em bounded}, if  for each $X \in \mathcal{D}$, there
exist $m \leq n$ such that $X \in \mathcal{D}^{\leq n}\cap
\mathcal{D}^{\geq m}$. We observe that an object $X$  lies in $\mathcal{D}^{\leq n}\cap
\mathcal{D}^{\geq m}$ if and only if $H^p(X)=0$ for $p<m$ or $p>n$. Moreover, we have
\begin{align}\label{equ:iso4}
\mathcal{D}^{\leq n}\cap \mathcal{D}^{\geq m}=\mathcal{A}[-m]\star \mathcal{A}[-(m+1)]\star \cdots \star \mathcal{A}[-n].
\end{align}

 Inspired by \cite[Section 6]{Kel05}, we call  a bounded $t$-structure $(\mathcal{D}^{\leq 0},
\mathcal{D}^{\geq 0})$ \emph{hereditary}, provided that ${\rm Hom}_\mathcal{D}(X, Y[n])=0$ for $n\geq 2$ and $X, Y\in \mathcal{A}$. Hereditary $t$-structures are called \emph{split} in \cite[Definition 4.1]{BR}.

The following result is essentially due to \cite[Proposition 1 b)]{Kel05}.

 \begin{lem}\label{lem:split}
 Let $(\mathcal{D}^{\leq 0},
\mathcal{D}^{\geq 0})$ be a hereditary $t$-structure on $\mathcal{D}$ with its heart $\mathcal{A}$. Then the abelian category $\mathcal{A}$ is hereditary and each object $X\in \mathcal{D}$ is isomorphic to $\bigoplus_{p\in \mathbb{Z}} H^p(X)[-p]$.
 \end{lem}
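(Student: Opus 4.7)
The plan is to prove the splitting decomposition first, and then to deduce the hereditariness of $\mathcal{A}$; both steps rely on the standing hypothesis ${\rm Hom}_{\mathcal{D}}(A, B[n])=0$ for all $A, B\in \mathcal{A}$ and $n\geq 2$.

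For the splitting, I would induct on the number of indices $p$ with $H^p(X)\neq 0$, which is finite by boundedness of the $t$-structure. The base cases ($X=0$ or $X\in \mathcal{A}[-p]$ for a single $p$) are immediate. For the inductive step, let $N$ be the largest index with $H^N(X)\neq 0$. Boundedness and the long exact cohomology sequence give $\tau_{\geq N}X\in \mathcal{D}^{\leq N}\cap \mathcal{D}^{\geq N}=\mathcal{A}[-N]$, whence $\tau_{\geq N}X\simeq H^N(X)[-N]$, and (\ref{equ:tri1}) specialises to
\[
\tau_{\leq N-1}X\longrightarrow X\longrightarrow H^N(X)[-N]\stackrel{c}{\longrightarrow}(\tau_{\leq N-1}X)[1].
\]
Applying the inductive hypothesis to $\tau_{\leq N-1}X$ (which has strictly fewer nonzero cohomologies), the target of $c$ becomes $\bigoplus_{p<N}H^p(X)[-p+1]$, so every component of $c$ lies in ${\rm Hom}_{\mathcal{D}}(H^N(X),H^p(X)[N-p+1])$ with $N-p+1\geq 2$, and therefore vanishes by hypothesis. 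Hence $c=0$, the triangle splits, and $X\simeq \bigoplus_{p}H^p(X)[-p]$.

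For the hereditariness, I would take an arbitrary $2$-extension $\eta\colon 0\to X\to M_1\stackrel{d}{\to} M_2\to Z\to 0$ in $\mathcal{A}$, factor $d$ through $K={\rm im}(d)$ as $M_1\stackrel{\bar d}{\to} K\hookrightarrow M_2$, and denote the resulting two short exact sequences by $\xi_1$ and $\xi_2$, with associated morphisms $\omega_1\colon K\to X[1]$ and $\omega_2\colon Z\to K[1]$ from (\ref{equ:iso1}). Applying ${\rm Hom}_{\mathcal{D}}(Z,-)$ to the triangle $X\to M_1\to K\to X[1]$ coming from $\xi_1$ and translating through (\ref{equ:iso1}) yields the exact fragment
\[
{\rm Ext}^1_{\mathcal{A}}(Z,M_1)\stackrel{\bar d_\ast}{\longrightarrow}{\rm Ext}^1_{\mathcal{A}}(Z,K)\longrightarrow {\rm Hom}_{\mathcal{D}}(Z,X[2]).
\]
The rightmost term vanishes by hypothesis, so $\bar d_\ast$ is surjective; in particular $[\xi_2]$ lies in its image. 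The classical Ext long exact sequence of $\xi_1$ in $\mathcal{A}$ then provides the Yoneda connecting map $\delta\colon {\rm Ext}^1_{\mathcal{A}}(Z,K)\to {\rm Ext}^2_{\mathcal{A}}(Z,X)$ with $\delta\circ \bar d_\ast=0$ and $\delta([\xi_2])=[\xi_1]\cdot [\xi_2]=[\eta]$, so $[\eta]=0$.

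The main obstacle is the naturality bookkeeping in the second step: one must verify that the isomorphism (\ref{equ:iso1}) is sufficiently functorial in both variables so that post-composition with $\bar d[1]$ on the triangulated side matches the pushforward $\bar d_\ast$ on the Ext side, and that Yoneda composition $[\xi_1]\cdot[\xi_2]$ is indeed computed by the classical connecting morphism $\delta$. Both points are standard but essential for transporting the triangulated vanishing ${\rm Hom}_{\mathcal{D}}(Z,X[2])=0$ into the abelian ${\rm Ext}^2$-vanishing.
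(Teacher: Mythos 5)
Your proof is correct and follows essentially the same strategy as the paper's: for the splitting you run an induction on the number of nonzero cohomologies, peel off the top cohomology via the truncation triangle, and kill the connecting map $c$ using the vanishing of $\mathrm{Hom}_{\mathcal{D}}(\mathcal{A},\mathcal{A}[\geq 2])$; for hereditariness you turn the hypothesis into surjectivity of $\mathrm{Ext}^1_{\mathcal{A}}(Z,M_1)\to\mathrm{Ext}^1_{\mathcal{A}}(Z,K)$ and thereby kill Yoneda $2$-extensions, which is exactly the content of the paper's remark that $\mathrm{Ext}^1_{\mathcal{A}}(Z,-)$ is right exact. The only organizational difference is that the paper first proves the vanishing $\mathrm{Hom}_{\mathcal{D}}(\mathcal{D}^{\geq n+1},\mathcal{D}^{\leq n-1})=0$ in general via~(\ref{equ:iso4}) and then inducts, whereas you apply the inductive hypothesis first and verify $c=0$ componentwise; both amount to the same estimate on the shift gap.
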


 \begin{proof}
 Indeed, by the isomorphism (\ref{equ:iso1}) the functor ${\rm Ext}_\mathcal{A}^1(Z, -)$ is right exact for each object $Z$ in $\mathcal{A}$. This implies that the abelian category $\mathcal{A}$ is hereditary.

 We observe that an object $A$ in $\mathcal{D}^{\geq n+1}$ necessarily lies in  $\mathcal{D}^{\leq r}\cap\mathcal{D}^{\geq n+1}$ for some $r>n+1$. Similarly, an object $B$ in $\mathcal{D}^{\leq n-1}$ lies in $\mathcal{D}^{\leq n-1}\cap \mathcal{D}^{\geq s}$ for  some $s<n-1$. By (\ref{equ:iso4}) and the hereditary assumption, we infer that ${\rm Hom}_\mathcal{D}(A, B)=0$. In particular, the morphism $c$ in (\ref{equ:tri1}) vanishes. Then the triangle (\ref{equ:tri1}) splits and thus $X\simeq \tau_{\leq n}X \oplus \tau_{\geq n+1}X$.

 To prove the last statement,  we use induction on $l(X)$, the cardinality of the set $\{p\in \mathbb{Z}\; |\; H^p(X)\neq 0\}$. If $l(X)=1$, we assume that $H^p(X)\neq 0$. Then $X$ lies in $\mathcal{D}^{\leq p} \cap \mathcal{D}^{\geq p}$. It follows that $X\simeq H^p(X)[-p]$. In general, we take the largest $p$ with $H^p(X)\neq 0$. We observe that $l(\tau_{\leq p-1}X)=l(X)-1$ and $l(\tau_{\geq p}X)=1$. Applying the induction, we are done by the isomorphism $X\simeq \tau_{\leq p-1}X \oplus \tau_{\geq p}X$.
\
\end{proof}

The canonical example is as follows.

\begin{exm}\label{exm:can}
{\rm Let $\mathcal{A}$ be an abelian category. The bounded derived category $\mathbf{D}^b(\mathcal{A})$ has a \emph{canonical $t$-structure} with $\mathbf{D}^b(\mathcal{A})^{\leq 0}=\{X\in \mathbf{D}^b(\mathcal{A})\; |\; H^i(X)=0 \mbox{ for }i>0\}$ and $\mathbf{D}^b(\mathcal{A})^{\geq 0}=\{X\in \mathbf{D}^b(\mathcal{A})\; |\; H^i(X)=0 \mbox{ for }i<0\}$. Here, $H^i$ denotes the $i$-th cohomology of a complex. The heart is naturally identified with $\mathcal{A}$. Here, the abelian category $\mathcal{A}$ is canonically embedded into $\mathbf{D}^b(\mathcal{A})$ by sending each object $A$ to the stalk complex concentrated on degree zero, which is still denoted by $A$.

The canonical $t$-structure is bounded. Moreover, it is hereditary if and only if the category $\mathcal{A}$ is hereditary. In this case,  each object $X$ in $\mathbf{D}^b(\mathcal{A})$ is isomorphic to $\bigoplus_{i\in \mathbb{Z}} H^p(X)[-p]$ by Lemma \ref{lem:split}; compare \cite[Subsection 1.6]{Kra}. }
\end{exm}

\subsection{Characterizations of hereditary triangulated categories}

Let $\mathcal{D}$ be a triangulated category. For a full subcategory $\mathcal{S}$, we denote by ${\rm add}\; \mathcal{S}$ the smallest additive subcategory containing $\mathcal{S}$ and closed under isomorphisms. We do not require that ${\rm add}\; \mathcal{S}$ is closed under direct summands.  Let $\mathcal{S}'$ be another full subcategory. By ${\rm Hom}_\mathcal{D}(\mathcal{S}, \mathcal{S}')=0$, we mean that ${\rm Hom}_\mathcal{D}(X, Y)=0$ for each object $X\in \mathcal{S}$ and $Y\in \mathcal{S}'$.

\begin{thm}\label{thm:A}
Let $\mathcal{D}$ be a triangulated category with $\mathcal{A}$ its full additive subcategory. The following statements are equivalent:
\begin{enumerate}
\item There is a hereditary $t$-structure on $\mathcal{D}$ with $\mathcal{A}$ its heart.
\item $\mathcal{D}={\rm add}\; (\bigcup_{n\in \mathbb{Z}}\mathcal{A}[n])$ and ${\rm Hom}_\mathcal{D}(\mathcal{A}, \mathcal{A}[m])=0$ for $m<0$.
\item The category $\mathcal{A}$ is hereditary abelian with an equivalence $F\colon \mathbf{D}^b(\mathcal{A})\rightarrow \mathcal{D}$ of categories, which commutes with the translation functors and respects the canonical embedding of $\mathcal{A}$ into $\mathbf{D}^b(\mathcal{A})$.
\end{enumerate}
\end{thm}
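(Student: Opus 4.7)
The plan is to prove $(1) \Rightarrow (2)$, $(3) \Rightarrow (1)$, $(2) \Rightarrow (1)$, and $(1) \Rightarrow (3)$. The first two are the easy directions. For $(1) \Rightarrow (2)$, Lemma \ref{lem:split} gives the decomposition $X \cong \bigoplus H^p(X)[-p]$, hence $\mathcal{D} = {\rm add}\;(\bigcup_n \mathcal{A}[n])$; the Hom vanishing for $m<0$ follows from (T1), since $\mathcal{A}[m] \subseteq \mathcal{D}^{\geq 1}$ while $\mathcal{A} \subseteq \mathcal{D}^{\leq 0}$. For $(3) \Rightarrow (1)$, I transport the canonical hereditary $t$-structure on $\mathbf{D}^b(\mathcal{A})$ to $\mathcal{D}$ via $F$: the closure axioms and (T1) transfer because $F$ is a translation-commuting equivalence of categories, while (T3) transfers because Lemma \ref{lem:split} makes the canonical truncation triangles on $\mathbf{D}^b(\mathcal{A})$ split, and split triangles are preserved by any additive equivalence. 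The heart equals $\mathcal{A}$ because $F$ respects the canonical embedding.

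For $(2) \Rightarrow (1)$, I set $\mathcal{D}^{\leq 0} := {\rm add}\;(\bigcup_{n \geq 0} \mathcal{A}[n])$ and $\mathcal{D}^{\geq 0} := {\rm add}\;(\bigcup_{n \leq 0} \mathcal{A}[n])$. Axioms (T1) and (T2) follow from the hypothesis, and (T3) from splitting any $X \cong \bigoplus_i A_i[n_i]$ by the sign of $n_i$. I identify the heart with $\mathcal{A}$ by block-diagonalising an arbitrary isomorphism $\bigoplus A_i[n_i] \cong \bigoplus A'_j[n'_j]$ with $n_i \geq 0$ and $n'_j \leq 0$: the Hom vanishing kills entries between summands of different shift, so the iso factors through the degree-zero parts, forcing the non-zero-shift summands to vanish. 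The key step is verifying hereditarity of this $t$-structure: given $f \colon A \to B[n]$ with $n \geq 2$, complete to a triangle $A \xrightarrow{f} B[n] \xrightarrow{\alpha} C \xrightarrow{\beta} A[1]$. The long exact sequence of the cohomological functor $H^\bullet$ attached to this $t$-structure yields $H^{-n}(C) = B$, $H^{-1}(C) = A$, and zero elsewhere; combined with the decomposition hypothesis for $C$, this gives $C \cong B[n] \oplus A[1]$. The vanishing ${\rm Hom}_\mathcal{D}(B[n], A[1]) = {\rm Hom}_\mathcal{D}(B, A[1-n]) = 0$ then forces $\alpha$ to factor through the summand inclusion $i_1 \colon B[n] \to C$ as $\alpha = i_1 \alpha_1$. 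Applying $H^{-n}$ to the triangle forces $\alpha_1 \in {\rm End}_\mathcal{D}(B[n])$ to be an isomorphism; then $\alpha f = 0$ gives $\alpha_1 f = 0$, whence $f = 0$.

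For $(1) \Rightarrow (3)$, Lemma \ref{lem:split} makes $\mathcal{A}$ hereditary abelian, so $\mathbf{D}^b(\mathcal{A})$ is defined. I construct $F\colon \mathbf{D}^b(\mathcal{A}) \to \mathcal{D}$ on objects by $F(\bigoplus A_i[n_i]) := \bigoplus A_i[n_i]$ (the $A_i$'s lying in $\mathcal{A}$, canonically embedded in both categories). On morphisms, the Hom spaces on both sides between shifted objects $A[-p]$ and $B[-q]$ are nonzero only for $p - q \in \{0, 1\}$, equal to ${\rm Hom}_\mathcal{A}(A, B)$ or ${\rm Ext}^1_\mathcal{A}(A, B)$ respectively (the latter matched with ${\rm Hom}_\mathcal{D}(A, B[1])$ via (\ref{equ:iso1})); define $F$ accordingly. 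Functoriality reduces to the compatibility of Yoneda composition with (\ref{equ:iso1}), a standard check. The main obstacle throughout is the hereditarity step in $(2) \Rightarrow (1)$, where the asymmetric Hom-vanishing hypothesis has to be leveraged through the explicit cone decomposition $C \cong B[n] \oplus A[1]$ and the long exact sequence of $H^\bullet$.
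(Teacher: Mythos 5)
Your proposal is correct, and the overall architecture matches the paper: the same chain of implications, the same construction of the $t$-structure from condition (2) via $\mathcal{D}^{\leq 0} = {\rm add}\,(\bigcup_{n\geq 0}\mathcal{A}[n])$ and $\mathcal{D}^{\geq 0} = {\rm add}\,(\bigcup_{n\leq 0}\mathcal{A}[n])$, and the same construction of the functor $F$ in $(1)\Rightarrow(3)$ via the decomposition from Lemma~\ref{lem:split} and the isomorphism~(\ref{equ:iso1}).

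The one place where you take a genuinely different route is the hereditarity verification in $(2)\Rightarrow(1)$. The paper's argument is leaner: it forms the triangle $A\stackrel{u}\to B[n]\to C_1\oplus C_2\to A[1]$ with $C_1\in {\rm add}\,(\bigcup_{m\geq 2}\mathcal{A}[m])$ and $C_2\in{\rm add}\,(\bigcup_{m\leq 1}\mathcal{A}[m])$, observes ${\rm Hom}_\mathcal{D}(B[n],C_2)=0$, ${\rm Hom}_\mathcal{D}(C_1,A[1])=0$, and ${\rm Hom}_\mathcal{D}(B[n],A)=0$, and invokes Lemma~\ref{lem:basic}(3) to kill $u$ immediately. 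Your argument instead computes the full cohomology of the cone $C$ via the long exact sequence of $H^\bullet$ (correctly obtaining $H^{-n}(C)\cong B$, $H^{-1}(C)\cong A$, zero elsewhere, using $n\geq 2$), matches this against the decomposition furnished by condition~(2) to get $C\cong B[n]\oplus A[1]$, and then extracts the invertible endomorphism $\alpha_1$ of $B[n]$ to conclude $f=0$. Both arguments are valid. The paper's is shorter and requires only the formal Lemma~\ref{lem:basic}(3); yours makes the structure of the cone explicit and avoids that lemma at the cost of running the cohomology machinery and carefully matching $H^\bullet(C)$ with the additive decomposition. Your block-diagonalisation argument identifying the heart with $\mathcal{A}$, and your spelled-out treatment of why a non-triangle equivalence $F$ still transports (T3) in $(3)\Rightarrow(1)$ (split triangles are preserved by additive translation-commuting functors), are both correct elaborations of points the paper leaves terse.
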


We will call a triangulated category $\mathcal{D}$ \emph{hereditary} provided that there is a full additive subcategory $\mathcal{A}$ satisfying one of the above equivalent conditions.

\begin{proof}
The implication ``$(1)\Rightarrow (2)$" follows from Lemma \ref{lem:split}. Example \ref{exm:can} implies ``$(3)\Rightarrow (1)$".

For ``$(2)\Rightarrow (1)$", we observe first that any object $X \in \mathcal{A}\cap (\mathcal{A}[n])$ is necessarily zero for nonzero $n$. Set $\mathcal{D}^{\leq 0}={\rm add}\; (\bigcup_{n\geq 0}\mathcal{A}[n])$ and $\mathcal{D}^{\geq 0}={\rm add}\; (\bigcup_{n\leq 0}\mathcal{A}[n])$. We claim that $(\mathcal{D}^{\leq 0}, \mathcal{D}^{\geq 0})$ is a bounded $t$-structure on $\mathcal{D}$. Indeed, the conditions (T1) and (T2) are immediate. Take any object $X\in \mathcal{D}$. By the assumption, we have $X=A\oplus (B[-1])$ with $A\in \mathcal{D}^{\leq 0}$ and $B\in \mathcal{D}^{\geq 0}$. Then the split triangle $A\rightarrow X\rightarrow B[-1]\rightarrow A[1]$ proves (T3). The boundedness of this $t$-structure is evident.

The heart of the above $t$-structure is $\mathcal{A}$. To prove that the $t$-structure is hereditary, we take a morphism $u\colon A\rightarrow B[n]$ with $A, B\in \mathcal{A}$ and $n\geq 2$. Form an exact triangle $A\stackrel{u}\rightarrow B[n]\rightarrow C_1\oplus C_2\rightarrow A[1]$ with $C_1\in {\rm add}\; (\bigcup_{m\geq 2}\mathcal{A}[m])$ and $C_2\in {\rm add}\; (\bigcup_{m\leq 1}\mathcal{A}[m])$. We observe that ${\rm Hom}_\mathcal{D}(B[n], C_2)=0={\rm Hom}_\mathcal{D}(C_1, A[1])$. Then we have $u=0$ by Lemma \ref{lem:basic}(3).

It remains to show ``$(1)+(2)\Rightarrow (3)$". By Lemma \ref{lem:split}, the category $\mathcal{A}$ is hereditary abelian. In particular, we  have $\mathbf{D}^b(\mathcal{A})={\rm add}\; \; (\bigcup_{n\in \mathbb{Z}}\mathcal{A}[n])$. We construct an additive functor $F\colon \mathbf{D}^b(\mathcal{A})\rightarrow \mathcal{D}$ as follows. For each $n\in \mathbb{Z}$, we set  $F(A[n])=A[n]$ and $F(f[n])=f[n]$ for any object $A\in \mathcal{A}$ and any morphism $f\colon A\rightarrow B$ in $\mathcal{A}$. For a morphism $w\in {\rm Hom}_{\mathbf{D}^b(\mathcal{A})}(A, B[1])$, we consider the exact triangle $B\stackrel{a}\rightarrow E\stackrel{b}\rightarrow A \stackrel{w}\rightarrow B[1]$ in $\mathbf{D}^b(\mathcal{A})$, where $0\rightarrow  B\stackrel{a}\rightarrow E\stackrel{b}\rightarrow A\rightarrow 0$ is the short exact sequence corresponding to $w$. We define the morphism $F(w)$ by the unique exact triangle $B\stackrel{a}\rightarrow E\stackrel{b}\rightarrow A \stackrel{F(w)}\rightarrow B[1]$ in $\mathcal{D}$. More generally, we set $F(w[n])=F(w)[n]$. One verifies that $F$ is indeed a functor, where the bifunctorialness of the isomorphism (\ref{equ:iso1}) is implicitly used. Then this functor $F$ is as required.
\end{proof}

The following fact is standard.

\begin{lem}\label{lem:basic}
Let $A\stackrel{u}\rightarrow B  \stackrel{\binom{v}{0}}\rightarrow C_1\oplus C_2\rightarrow A[1]$ be an exact triangle in $\mathcal{D}$.  Then the following statements hold.
\begin{enumerate}
\item The object $C_2$ is a direct summand of $A[1]$. In particular, $C_2=0$ whenever ${\rm Hom}_\mathcal{D}(C_2, A[1])=0$.
    \item If $A$ is indecomposable and $C_2\neq 0$, then we have  $u=0$.
\item If ${\rm Hom}_\mathcal{D}(C_1, A[1])=0$ and ${\rm Hom}_\mathcal{D}(B, A)=0$, then we have $u=0$.
\end{enumerate}
\end{lem}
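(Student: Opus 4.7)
The plan is to control the third morphism $w\colon C_1\oplus C_2\to A[1]$ of the triangle, especially its restriction $w_2:=w\circ\iota_2\colon C_2\to A[1]$ to the second summand, and then to exploit the identity that consecutive morphisms in an exact triangle compose to zero.

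For (1), I would apply the contravariant functor ${\rm Hom}_\mathcal{D}(-, C_2)$ to the triangle. The projection $p_2\colon C_1\oplus C_2\to C_2$ is killed by precomposition with $\binom{v}{0}$, so the long exact sequence lifts it along $w$ to some $s\colon A[1]\to C_2$ with $s\circ w=p_2$; restricting along $\iota_2$ yields $s\circ w_2={\rm id}_{C_2}$. Hence $w_2$ is a split monomorphism, and completing it to an exact triangle exhibits $A[1]\cong C_2\oplus K$ for some $K$. If moreover ${\rm Hom}_\mathcal{D}(C_2,A[1])=0$, then $w_2=0$ and ${\rm id}_{C_2}=s\circ w_2=0$, forcing $C_2=0$.

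For (2) and (3) I would use the following uniform reduction: if $w$ admits a right inverse $\sigma\colon A[1]\to C_1\oplus C_2$, then in the rotated triangle $B\to C_1\oplus C_2\xrightarrow{w}A[1]\xrightarrow{-u[1]}B[1]$, combining $(-u[1])\circ w=0$ with $w\circ\sigma={\rm id}_{A[1]}$ yields $-u[1]=((-u[1])\circ w)\circ\sigma=0$, hence $u=0$. So it suffices to show that $w_2$ is an isomorphism, for then $\sigma=\binom{0}{w_2^{-1}}$ is a right inverse to $w=(w_1,w_2)$ regardless of $w_1$. In (2), part (1) gives $A[1]\cong C_2\oplus K$; since $C_2\neq 0$ and $A[1]$ is indecomposable, $K=0$ and $w_2$ is an isomorphism. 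In (3), the hypothesis ${\rm Hom}_\mathcal{D}(C_1,A[1])=0$ first forces $w_1=0$, so $w=(0,w_2)$; then applying ${\rm Hom}_\mathcal{D}(-, A[1])$ to the rotated triangle and using ${\rm Hom}_\mathcal{D}(B[1],A[1])\cong {\rm Hom}_\mathcal{D}(B,A)=0$, the long exact sequence makes precomposition with $w$ injective on ${\rm End}_\mathcal{D}(A[1])$, which --- since $w=(0,w_2)$ --- says that any endomorphism $f$ of $A[1]$ with $f\circ w_2=0$ must vanish. Applying this to $f={\rm id}_{A[1]}-w_2\circ s$ (which satisfies $f\circ w_2=w_2-w_2\circ(s\circ w_2)=0$) yields $w_2\circ s={\rm id}_{A[1]}$, so $w_2$ is an isomorphism.

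The main subtlety I anticipate lies in (3): only by combining both hypotheses --- the first to reduce $w$ to its second component, and the second to enforce vanishing of any endomorphism of $A[1]$ killed by $w_2$ --- does the common reduction through a right inverse to $w$ become available. Parts (1) and (2) are then direct applications of the long exact sequence and of indecomposability.
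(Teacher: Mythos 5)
Your proof is correct, but it takes a genuinely different route than the paper. The paper's proof is structural: it observes that since the second morphism of the triangle is $v\oplus 0\colon B\oplus 0 \to C_1\oplus C_2$, the whole triangle decomposes as a direct sum of $A'\to B\xrightarrow{v} C_1\to A'[1]$ and $C_2[-1]\to 0\to C_2\xrightarrow{\mathrm{Id}} C_2$ (in particular $A\cong A'\oplus C_2[-1]$). All three statements then drop out by inspecting the summands: (1) and (2) are immediate, and (3) invokes Happel's Lemma I.1.4 to show $A'\to B$ is split mono, hence zero, hence $A'=0$. Your proof instead controls the connecting morphism $w=(w_1,w_2)$ directly via long exact Hom-sequences: first showing $w_2$ is split mono (which gives (1) via the cone of a split mono, valid without idempotent-completeness), then upgrading it to an isomorphism in cases (2) and (3), so that $w$ admits a right inverse and the rotation identity $(-u[1])\circ w=0$ kills $u$. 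What the paper's approach buys is the explicit decomposition of the triangle itself, which makes the three statements transparent simultaneously and is useful elsewhere; what your approach buys is a self-contained, elementary argument that avoids the decomposition lemma and the reference to Happel's Lemma I.1.4, working only with long exact sequences and the split-mono criterion. Both are sound; the step in your (3) where you feed $f=\mathrm{id}_{A[1]}-w_2\circ s$ into the injectivity of $w^*$ is a nice touch and does the work that Happel's lemma does in the paper.
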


\begin{proof}
The morphism $B\rightarrow C_1\oplus C_2$ is of the form $v\oplus 0\colon B\oplus 0\rightarrow C_1\oplus C_2$.  It follows that the given triangle is isomorphic to the direct sum of $A'\rightarrow B\stackrel{v}\rightarrow C_1\rightarrow A'[1]$ and $C_2[-1]\rightarrow 0\rightarrow C_2\stackrel{{\rm Id}} \rightarrow C_2$. Then (1) follows immediately. For (2), we just observe that $A'=0$.

For (3), we observe that ${\rm Hom}_\mathcal{D}(C_1, A'[1])=0$ since $A'$ is a direct summand of $A$. For the same reason, we have ${\rm Hom}_\mathcal{D}(B, A')=0$. However, by \cite[Lemma I.1.4]{Hap} the morphism $A'\rightarrow B$ is split mono, which is then forced to be zero. Then we are done.
\end{proof}

For a finite dimensional algebra $A$ over a field, we denote by $A\mbox{-mod}$ the abelian category of finite dimensional left $A$-modules.

\begin{exm}
{\rm There does exist a full additive subcategory $\mathcal{A}$
of a triangulated category $\mathcal{D}$ such that $\mathcal{A}$ is hereditary abelian with $\mathcal{D}={\rm add}\; (\bigcup_{n\in \mathbb{Z}}\mathcal{A}[n])$, whereas the condition ${\rm Hom}_\mathcal{D}(\mathcal{A}, \mathcal{A}[m])=0$ for $m<0$ is not satisfied.

Namely, let $\mathcal{D}=\mathbf{D}^b(A\mbox{-mod})$ with $A$  the path algebra of a quiver of type $A_2$ over a field $k$. This is the quiver with two vertices, say $1$ and $2$, and a single arrow
$1\rightarrow  2$. Note that $A\mbox{-mod}$ has precisely three indecomposable modules, say
$S_1, I, S_2$, where $S_1$ is simple injective, $I$ has length 2, and $S_2$ is simple projective.
Consider the full subcategory $\mathcal{A}={\rm add}\; (S_1\oplus S_2\oplus I[1])$.
Then $\mathcal{A}$ is a hereditary abelian category, which is even semisimple: it
is equivalent to the category of $H$-modules, where $H = k\times k \times k.$
Every indecomposable object of $\mathcal{D}$ can be shifted into $\mathcal{A}$, but
there is a nonzero homomorphism $A \rightarrow B[-1]$ where $A, B$ belong to
$\mathcal{A}$;  just take $A = S_2, B = I[1].$
Observe that the categories $\mathcal{D}$ and $\mathbf{D}^b(\mathcal{A})$ are not equivalent.}
\end{exm}

\section{Algebraical hereditary triangulated categories}

In this section, we prove that if the triangulated category is algebraical and hereditary, then it is triangle equivalent to the bounded derived category of a hereditary abelian category.  We use the existence result on the realization functor in \cite{Bei}.

\subsection{The triangle equivalence}

Let $(\mathcal{D}^{\leq 0}, \mathcal{D}^{\geq 0})$ be a bounded $t$-structure on a triangulated category $\mathcal{D}$ with the heart $\mathcal{H}$. By a \emph{realization functor} of the $t$-structure, we mean a triangle functor $F\colon \mathbf{D}^b(\mathcal{H})\rightarrow \mathcal{D}$ such that $F(\mathcal{H})=\mathcal{H}$ and its restriction $F|_\mathcal{H}$ is isomorphic to the identity functor. We observe that $F$ sends $\mathbf{D}^b(\mathcal{H})^{\leq 0}$ to $\mathcal{D}^{\leq 0}$, $\mathbf{D}^b(\mathcal{H})^{\geq 0}$ to $\mathcal{D}^{\geq 0}$.

\begin{lem}\label{lem:real}
Let $(\mathcal{D}^{\leq 0}, \mathcal{D}^{\geq 0})$ be a hereditary $t$-structure on $\mathcal{D}$. Then any realization functor $F\colon \mathbf{D}^b(\mathcal{H})\rightarrow \mathcal{D}$ is a triangle equivalence.
\end{lem}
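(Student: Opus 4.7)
The plan is to verify separately that $F$ is essentially surjective and fully faithful; both arguments will rest on the decomposition provided by Lemma \ref{lem:split}. Since the $t$-structure on $\mathcal{D}$ is hereditary, every $X\in\mathcal{D}$ is isomorphic to $\bigoplus_p H^p(X)[-p]$ with $H^p(X)\in\mathcal{H}$. Because $\mathcal{H}$ is hereditary abelian by Lemma \ref{lem:split}, the canonical $t$-structure on $\mathbf{D}^b(\mathcal{H})$ is also hereditary (Example \ref{exm:can}), so every object of $\mathbf{D}^b(\mathcal{H})$ admits the analogous decomposition. Essential surjectivity is then immediate: given $X\in\mathcal{D}$, the object $Y=\bigoplus_p H^p(X)[-p]\in\mathbf{D}^b(\mathcal{H})$ satisfies $F(Y)\simeq\bigoplus_p F(H^p(X))[-p]\simeq X$, since $F$ is a triangle functor and $F|_\mathcal{H}\simeq\mathrm{Id}_\mathcal{H}$.

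For fully faithfulness, additivity of $F$ together with the two decompositions reduces the problem to showing that $F$ induces a bijection ${\rm Hom}_{\mathbf{D}^b(\mathcal{H})}(A,B[k])\to{\rm Hom}_\mathcal{D}(A,B[k])$ for all $A,B\in\mathcal{H}$ and all $k\in\mathbb{Z}$. Axiom (T1) applied to both hereditary $t$-structures forces vanishing for $k<0$, and the hereditary hypothesis forces vanishing for $k\geq 2$, so only $k=0$ and $k=1$ remain. The case $k=0$ is handled by the isomorphism $F|_\mathcal{H}\simeq\mathrm{Id}_\mathcal{H}$.

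The case $k=1$ is where the real work happens, and I expect it to be the main obstacle. Here I would exploit the natural isomorphism (\ref{equ:iso1}), which is available for any $t$-structure and identifies ${\rm Hom}(A,B[1])$ with the group ${\rm Ext}^1_\mathcal{H}(A,B)$ of Yoneda extensions. Given a short exact sequence $\xi\colon 0\to B\to E\to A\to 0$ in $\mathcal{H}$, the corresponding connecting morphism in $\mathbf{D}^b(\mathcal{H})$ fits into an exact triangle $B\to E\to A\to B[1]$, which $F$ carries to an exact triangle in $\mathcal{D}$ that, after identifying $F|_\mathcal{H}$ with the identity, realizes the same extension $\xi$ via (\ref{equ:iso1}). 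Hence $F$ intertwines the two instances of (\ref{equ:iso1}), forcing the induced map on ${\rm Hom}(A,B[1])$ to be a bijection. The delicate step is the bifunctoriality check needed to see that the square formed by $F$ and the two Ext-identifications genuinely commutes; once this compatibility is in place, the lemma follows.
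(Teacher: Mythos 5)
Your proof is correct, and its skeleton (vanishing for $k<0$ by (T1), for $k\ge 2$ by the hereditary condition, identity on $k=0$, and the isomorphism (\ref{equ:iso1}) for $k=1$) matches the paper's. Where you differ is in the two reduction steps. For fully-faithfulness, the paper invokes Happel's d\'evissage result [Hap, Lemma II.3.4], which works for any bounded $t$-structure and reduces the claim to the statement that $F$ induces isomorphisms $\operatorname{Ext}^i_\mathcal{H}(Z,X)\to\operatorname{Hom}_\mathcal{D}(Z,X[i])$; you instead use the explicit splitting $X\simeq\bigoplus_p H^p(X)[-p]$ from Lemma \ref{lem:split}, available on both sides because both $t$-structures are hereditary, to reduce directly to $\operatorname{Hom}(A,B[k])$ with $A,B$ in the heart. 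For essential surjectivity, the paper notes that $\operatorname{Im}F$ is a triangulated subcategory containing $\mathcal{H}$ and that the smallest such subcategory is all of $\mathcal{D}$ (a general fact about bounded $t$-structures, cf.\ (\ref{equ:iso4})); you again use the splitting to exhibit an explicit preimage. Your route is more elementary and self-contained in that it avoids the external citation, but it leans on the hereditary hypothesis in the reductions themselves rather than only in the Ext computation; the paper's reductions are the ones that would generalize to a realization functor for an arbitrary bounded $t$-structure. The "delicate step" you flag at the end — that a triangle functor with $F|_\mathcal{H}\simeq\mathrm{Id}$ necessarily sends $[\xi]$ to $\omega$ in the sense of (\ref{equ:iso1}) — is exactly the point the paper also asserts without further elaboration, so you are on equal footing there.
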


\begin{proof}
The abelian category  $\mathcal{H}$ is hereditary by Lemma \ref{lem:split}. For the triangle equivalence, it suffices to show that $F$ is an equivalence. For the fully-faithfulness, by applying \cite[Lemma II.3.4]{Hap} it suffices to show that $F$ induces an isomorphism between ${\rm Ext}_\mathcal{A}^i(Z, X)$ and ${\rm Hom}_\mathcal{D}(Z, X[i])$ for each $i\in \mathbb{Z}$. The cases $i\leq 0$ are trivial, and the cases $i\geq 2$ are also trivial by the hereditary condition. The remaining case $i=1$ follows from the isomorphism (\ref{equ:iso1}), since the triangle functor $F$ necessarily sends $[\xi]$ to $\omega$.

The essential image ${\rm Im}\; F$ of $F$ is a triangulated subcategory of $\mathcal{D}$ containing $\mathcal{A}$. It is well known that the smallest triangulated subcategory of $\mathcal{D}$ containing $\mathcal{A}$ is  $\mathcal{D}$ itself; compare (\ref{equ:iso4}).  Then we have ${\rm Im}\; F=\mathcal{D}$, proving the denseness of $F$.
\end{proof}

Recall from \cite[Subsection 8.7]{Kel07} that a triangulated category is \emph{algebraical} provided that it is triangle equivalent to the stable category of a Frobenius category. For example, the bounded derived category of an essentially small abelian category is algebraical; see \cite[Subsection 7.7]{Kra}.

The following version of \cite[Subsection A.6]{Bei} seems to be quite convenient for application, which will be proved in the next subsection.

\begin{prop}\label{prop:real}
Let $\mathcal{D}$ be an algebraical triangulated category with a bounded $t$-structure and its heart $\mathcal{H}$. Then there is a realization functor $F\colon \mathbf{D}^b(\mathcal{H})\rightarrow \mathcal{D}$.
\end{prop}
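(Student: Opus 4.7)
The plan is to reduce the statement to Beilinson's construction of realization functors from \cite[A.5--A.7]{Bei}, whose input is a filtered triangulated enhancement (an \emph{$f$-category} in the terminology of \cite[A.1]{Bei}) of $\mathcal{D}$. The algebraical hypothesis on $\mathcal{D}$ will be used precisely to produce such an $f$-category; the rest of the argument is then formal.

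First I would fix, once and for all, a Frobenius exact category $\mathcal{E}$ together with a triangle equivalence $\mathcal{D}\simeq\underline{\mathcal{E}}$. Next I would form the category $\mathcal{E}F$ whose objects are bounded decreasingly filtered objects $(X,F^{\bullet}X)$ of $\mathcal{E}$ and whose morphisms are filtration-preserving maps; the exact structure is declared pointwise, that is, a sequence in $\mathcal{E}F$ is admissible exactly when each graded piece is admissible in $\mathcal{E}$. A direct check shows that $\mathcal{E}F$ is again Frobenius, with projective-injective objects the direct sums of ``block'' filtered objects of the shape $(\cdots=0\subseteq P=\cdots=P\subseteq 0=\cdots)$ for $P$ projective-injective in $\mathcal{E}$. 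The stable category $\widetilde{\mathcal{D}}:=\underline{\mathcal{E}F}$ is then triangulated; the forgetful functor $\omega\colon\widetilde{\mathcal{D}}\to\mathcal{D}$, the ``shift of filtration'' autoequivalence $s\colon\widetilde{\mathcal{D}}\to\widetilde{\mathcal{D}}$, and the obvious natural transformation $\alpha\colon s\to\operatorname{id}$ equip $\widetilde{\mathcal{D}}$ with the structure of an $f$-category over $\mathcal{D}$.

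With this enhancement in hand, the rest is Beilinson's construction. The bounded $t$-structure on $\mathcal{D}$ lifts to a bounded $t$-structure on $\widetilde{\mathcal{D}}$ whose heart $\widetilde{\mathcal{H}}$ is naturally equivalent to the category $\mathbf{C}^b(\mathcal{H})$ of bounded complexes over $\mathcal{H}$, the differential being recovered from the connecting maps of the triangles linking successive filtration pieces. The composition
\[
\mathbf{C}^b(\mathcal{H})\;\xrightarrow{\;\sim\;}\;\widetilde{\mathcal{H}}\;\hookrightarrow\;\widetilde{\mathcal{D}}\;\xrightarrow{\;\omega\;}\;\mathcal{D}
\]
sends quasi-isomorphisms to isomorphisms and hence factors through a triangle functor $F\colon\mathbf{D}^b(\mathcal{H})\to\mathcal{D}$ whose restriction to $\mathcal{H}$ is canonically isomorphic to the identity. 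This $F$ is the desired realization functor.

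The main obstacle I anticipate is bureaucratic rather than conceptual: checking that $\mathcal{E}F$ is Frobenius with the advertised projective-injectives, and that the triple $(\widetilde{\mathcal{D}},\omega,s)$ satisfies all of the $f$-category axioms of \cite[A.1]{Bei}, in particular the existence of functorial triangles relating $s$-truncations. Both verifications are essentially routine but must be done carefully enough to produce the identification of the heart of the induced $t$-structure on $\widetilde{\mathcal{D}}$ with $\mathbf{C}^b(\mathcal{H})$. Once this identification is in place, the arguments in \cite[A.5--A.7]{Bei} apply verbatim and no further idea is required.
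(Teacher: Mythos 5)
Your proposal follows essentially the same route as the paper: fix a Frobenius model $\mathcal{E}$ for $\mathcal{D}$, form the Frobenius exact category of (bounded) filtered objects with componentwise admissible sequences, verify its stable category is a filtered triangulated category (an $f$-category) over $\mathcal{D}$ via the filtration-shift autoequivalence and the forgetful and truncation functors, and then invoke Beilinson's construction in \cite[Appendix]{Bei} to produce the realization functor. The only cosmetic difference is that you spell out the identification of the heart of the lifted $t$-structure with $\mathbf{C}^b(\mathcal{H})$, which the paper leaves implicit by citing \cite[Subsection A.6]{Bei} directly.
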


By combing Theorem \ref{thm:A}, Lemma \ref{lem:real} and Proposition \ref{prop:real}, we obtain the promised triangle equivalence.  It generalizes \cite[Proposition 4.2]{BR},  where the triangulated category $\mathcal{D}$ is assumed to be  the bounded derived category of some abelian category.

\begin{thm}\label{thm:A.1}
Let $\mathcal{D}$  be  a hereditary triangulated category, which is algebraical. Then $\mathcal{D}$ is triangle equivalent to the bounded derived category of a hereditary abelian category. \hfill $\square$
\end{thm}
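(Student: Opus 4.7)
The plan is to invoke the three preparatory results in sequence: Theorem \ref{thm:A} delivers the target hereditary abelian category together with a hereditary $t$-structure, Proposition \ref{prop:real} supplies a realization functor, and Lemma \ref{lem:real} promotes that functor to a triangle equivalence. All the substantive work has been done in those earlier statements, so the proof here is really a matter of verifying that the hypotheses line up and then citing the results in the right order.

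Concretely, I would first apply Theorem \ref{thm:A} to the hereditary triangulated category $\mathcal{D}$, obtaining a full additive subcategory $\mathcal{A}\subseteq\mathcal{D}$ such that $\mathcal{A}$ is hereditary abelian and $(\mathcal{D}^{\leq 0},\mathcal{D}^{\geq 0})=({\rm add}\;\bigcup_{n\geq 0}\mathcal{A}[n],\;{\rm add}\;\bigcup_{n\leq 0}\mathcal{A}[n])$ is a hereditary $t$-structure on $\mathcal{D}$ with heart $\mathcal{A}$. This $t$-structure is, by its very construction in the proof of Theorem \ref{thm:A}, bounded. Since $\mathcal{D}$ is assumed algebraical, Proposition \ref{prop:real} now produces a realization functor $F\colon \mathbf{D}^b(\mathcal{A})\rightarrow \mathcal{D}$. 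Finally, because the $t$-structure is hereditary, Lemma \ref{lem:real} ensures that $F$ is a triangle equivalence, and as $\mathcal{A}$ is hereditary abelian, this is precisely the desired conclusion.

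There is no genuine obstacle to this plan: the statement is an assembly of three already-proved results rather than a new argument. The only points requiring a moment of care are bookkeeping ones, namely that the $t$-structure produced by Theorem \ref{thm:A} is bounded (so that Proposition \ref{prop:real} applies) and hereditary (so that Lemma \ref{lem:real} applies), and that the heart $\mathcal{A}$ is recovered as a hereditary abelian category on which to take the bounded derived category. Both of these are immediate from the construction in Theorem \ref{thm:A} and from Lemma \ref{lem:split}. Hence the theorem follows by concatenating the three cited results.
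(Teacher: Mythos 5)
Your proposal is exactly the paper's own argument: the paper states that Theorem \ref{thm:A.1} follows by combining Theorem \ref{thm:A}, Lemma \ref{lem:real} and Proposition \ref{prop:real}, which is precisely the sequence you lay out. The bookkeeping you mention (boundedness and hereditariness of the $t$-structure, heart hereditary abelian) is correct and matches the paper.
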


\begin{rem}
(1) The above assumption of being algebraical is natural. Indeed, if the  triangulated category $\mathcal{D}$ is essentially  small, then it is triangle equivalent to the bounded derived category of a hereditary abelian category if and only if it is algebraical and hereditary.

(2) The case described in Theorem \ref{thm:A.1} seems to be one of the
rare situations, where the derived categories of a class of abelian
categories can easily be characterized as special triangulated categories.
Usually, the axioms of a triangulated category tend to be too broad for such
an endeavour.
\end{rem}

It is natural to ask the following question: is there a non-algebraical hereditary triangulated category? In view of \cite[Subsection A.6]{Bei} and Lemma \ref{lem:real}, such an example will provide a triangulated category, over which there are no filtered triangulated categories. Recently, this question is answered in the negative in \cite{Hu}, where it is proved that  any hereditary triangulated category is algebraical.

\subsection{Filtered objects and the realization functor}

  We will show that the formalism in \cite[Appendix]{Bei} on the existence of a realization functor applies for an algebraical triangulated category, and then prove Proposition \ref{prop:real}.

   We will construct explicitly  a filtered triangulated category  over any algebraical triangulated category. We mention that the treatment here unifies  the one in \cite[Section 3.1]{BBD} and \cite[Subsection  2.5]{AR}.

Let $\mathcal{A}$ be an additive category. An \emph{exact pair} $(i, d)$ consists of two composable morphisms $X\stackrel{i}\rightarrow Y\stackrel{d}\rightarrow Z$ such that $i={\rm Ker}\; d$ and $d={\rm Cok}\; i$. An exact structure $\mathcal{E}$ on $\mathcal{A}$ is a class of exact pairs, which is closed under isomorphisms and satisfies certain axioms. The pair $(\mathcal{A}, \mathcal{E})$ is called an \emph{exact category} in the sense of Quillen. The exact pairs $(i, d)$ in $\mathcal{E}$ are called conflations, where $i$ are inflations and $d$ are deflations. When the exact structure $\mathcal{E}$ is understood, we will call $\mathcal{A}$ an exact category. For details on exact categories, we refer to \cite[Appendix A]{Kel90}.

The following consideration is inspired by \cite{Kel90, Chen, AR}. Let $\mathcal{A}$ be an exact category. A \emph{filtered object} in $\mathcal{A}$ is an infinite  sequence in $\mathcal{A}$
\begin{align*}
\cdots \longrightarrow  X_{n+1}\stackrel{i_{n+1}}\longrightarrow X_{n} \stackrel{i_n}\longrightarrow X_{n-1}\longrightarrow \cdots
\end{align*}
such that  each morphism $i_n$ is an inflation and that for sufficiently large $n$, $X_{n}=0$, $X_{-n}=X$ and $i_{-n}={\rm Id}_X$ for some object $X$.  This filtered object is denoted by $X_\bullet=(X_\bullet, i_\bullet)$ or $(X_\bullet, i_\bullet^X)$, where $X$ is called its \emph{underlying object}. We denote  by $i_{n, -\infty}\colon X_n\rightarrow X$ the canonical morphism for each $n\in \mathbb{Z}$. As a finite composition of inflations, this canonical morphism is an inflation.

A morphism $f_\bullet\colon (X_\bullet, i_\bullet^X)\rightarrow (Y_\bullet, i_\bullet^Y)$ between filtered objects consists of morphisms $f_n\colon X_n\rightarrow Y_n$ satisfying $i^Y_n\circ f_n=f_{n-1}\circ i^X_{n}$. The composition of morphisms is componentwise. Then we have the category $F\mathcal{A}$ of filtered objects; it is an additive category. We denote by
$$\omega\colon F\mathcal{A}\longrightarrow \mathcal{A}$$
 the \emph{forgetful functor}, which sends each filtered object to its underlying object.

Each object $A$ in $\mathcal{A}$ defines a filtered object $j(A)$ by $j(A)_n=0$ for $n>0$, $j(A)_n=A$ and $i_n^{j(A)}={\rm Id}_A$ for $n\leq  0$. This gives rise to an additive functor $$j\colon \mathcal{A} \longrightarrow F\mathcal{A,}$$
 which is fully faithful.

The proof of the following lemma is by  a routine verification; compare \cite[Subsection 5.1]{Kel90}.

\begin{lem}
The category $F\mathcal{A}$ of filtered objects has an exact structure such that the conflations are given by $X_\bullet\stackrel{f_\bullet}\longrightarrow Y_\bullet \stackrel{g_\bullet}\longrightarrow Z_\bullet$ with each pair $(f_n, g_n)$ a conflation in $\mathcal{A}$. Moreover, the forgetful functor $\omega\colon F\mathcal{A}\rightarrow \mathcal{A}$  and $j\colon \mathcal{A}\rightarrow F\mathcal{A}$ are exact. \hfill $\square$
\end{lem}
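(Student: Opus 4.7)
The plan is to verify Quillen's axioms for an exact structure on $F\mathcal{A}$ componentwise, exploiting the fact that all relevant structure on $F\mathcal{A}$ is defined levelwise from data in the exact category $\mathcal{A}$. Since morphisms, kernels/cokernels of inflations/deflations, direct sums and isomorphisms in $F\mathcal{A}$ are all testable at each level $n$, the bulk of the argument reduces to routine transfer.

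First I would check the basic closure properties. A morphism $f_\bullet$ in $F\mathcal{A}$ is an isomorphism exactly when each $f_n$ is, so the proposed class of conflations is closed under isomorphism. Split sequences $X_\bullet \to X_\bullet \oplus Z_\bullet \to Z_\bullet$ are obviously in the class. The composition of two componentwise inflations (resp.\ deflations) is componentwise an inflation (resp.\ deflation), since this holds at each level in $\mathcal{A}$.

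Next I would construct pushouts of inflations. Given an inflation $f_\bullet \colon X_\bullet \to Y_\bullet$ and an arbitrary $h_\bullet \colon X_\bullet \to Z_\bullet$, form the componentwise pushout $P_n = Y_n \sqcup_{X_n} Z_n$ in $\mathcal{A}$; each $Z_n \to P_n$ is an inflation by the stability of inflations under pushout. The universal property of pushouts provides unique transition morphisms $i_n^P \colon P_{n+1} \to P_n$ making the natural squares with $i_n^Y$ and $i_n^Z$ commute, and so endowing $P_\bullet$ with the structure of a filtered object. The crux of the argument is that each $i_n^P$ is itself an inflation in $\mathcal{A}$: I would see this by recognizing $i_n^P$ as a pushout in $\mathcal{A}$, namely of the inflation $i_n^Z$ along the canonical map $Z_{n+1} \to P_{n+1}$, using compatibility of iterated pushouts; alternatively one invokes a $3\times 3$-lemma argument for the commutative cube whose vertical edges are the inflations $i_n^X, i_n^Y, i_n^Z$ and whose top and bottom faces are pushout squares. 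Pullbacks of componentwise deflations are handled dually.

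The exactness of $\omega$ is then immediate: for any conflation $X_\bullet \to Y_\bullet \to Z_\bullet$ in $F\mathcal{A}$, picking $n$ sufficiently negative recovers exactly the conflation of underlying objects. For the exactness of $j$, the image of a conflation $A \to B \to C$ in $\mathcal{A}$ has each component equal either to the original conflation (for $n \leq 0$) or to $0 \to 0 \to 0$ (for $n > 0$), both of which are conflations. The main obstacle I anticipate is exactly the step establishing that the transition maps on the pushout filtered object are inflations: this is where the axioms of an exact category for $\mathcal{A}$ are used in a nontrivial way, and it is essentially the only place where the argument is more than bookkeeping.
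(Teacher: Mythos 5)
The gap is in your pushout step, and it is genuine. First, your identification of $i_n^P$ with the pushout of $i_n^Z$ along $Z_{n+1}\to P_{n+1}$ is incorrect: by associativity of pushouts, $P_{n+1}\sqcup_{Z_{n+1}}Z_n\cong Y_{n+1}\sqcup_{X_{n+1}}Z_n$, which differs from $P_n=Y_n\sqcup_{X_n}Z_n$ unless the square on $X_\bullet,Y_\bullet$ happens to be a pushout. Second, the cube-style $3\times3$ argument cannot succeed using only the hypothesis that $i_n^X,i_n^Y,i_n^Z$ are inflations, because componentwise pushout does not preserve componentwise inflations. Take $\mathcal{A}$ to be $k$-vector spaces, $X_{n+1}=0\hookrightarrow X_n=k$, $Y_\bullet$ and $Z_\bullet$ constantly $k$ with identity transitions, and $f_n=h_n=\mathrm{id}_k$: all the relevant transitions are inflations, yet $P_{n+1}=k\oplus k\to P_n=k$ has a nonzero kernel.

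What is missing is the observation that an inflation in $F\mathcal{A}$ is strictly more than a componentwise inflation. Because the cokernel $Z_\bullet$ of a conflation must itself be an object of $F\mathcal{A}$, the induced maps $\mathrm{coker}(f_{n+1})\to\mathrm{coker}(f_n)$ must be inflations in $\mathcal{A}$; this is part of the data of $f_\bullet$ being an inflation. (In the counterexample above $\mathrm{coker}(f_{n+1})\to\mathrm{coker}(f_n)$ is $k\to 0$, which is not an inflation, so $f_\bullet$ was never an inflation in $F\mathcal{A}$ — that is precisely what rescues the lemma.) The correct pushout argument then runs as follows: writing $W_\bullet$ for your auxiliary filtered object, the maps $W_n\to P_n$ are inflations with cokernel $\mathrm{coker}(f_n)$, so the transitions assemble into morphisms of conflations $(W_{n+1}\to P_{n+1}\to\mathrm{coker}(f_{n+1}))\to(W_n\to P_n\to\mathrm{coker}(f_n))$ in which the two outer vertical maps are inflations; the standard fact for exact categories that the middle map in such a morphism of conflations is then also an inflation (a Noether-lemma/obscure-axiom argument) yields that $i_n^P$ is an inflation. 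The same point is needed to see that composites of inflations in $F\mathcal{A}$ are inflations, a step you also dispose of as if it were purely componentwise. The paper itself records the lemma as a routine verification following Keller, but the subtlety just described is the one nontrivial input that the verification hinges on.
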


For a filtered object $(X_\bullet, i_\bullet)$, we define a new filtered object $s(X_\bullet, i_\bullet)$ by $s(X)_n=X_{n-1}$ and $i_n^{s(X)}=i_{n-1}$. This gives rise to an automorphism
$$s\colon F\mathcal{A}\longrightarrow F\mathcal{A}$$
of exact categories, called the \emph{filtration-shift functor}. We observe a natural transformation $\alpha\colon {\rm Id}_{F\mathcal{A}}\rightarrow s$ by $(\alpha_{(X_\bullet, i_\bullet)})_n=i_n$ for each $n\in \mathbb{Z}$.

 We denote by $F\mathcal{A}(\leq 0)$ the full subcategory of $F\mathcal{A}$ consisting of objects $(X_\bullet, i_\bullet)$ with  $X_{n}$=0 for each $n\geq 1$.  Similarly, the full subcategory $F\mathcal{A}(\geq 0)$ are formed by objects $(X_\bullet, i_\bullet)$ with $X_{-n}=X$ and $i_{-n}={\rm Id}_X$ for all $n\geq 0$, where $X$ is the underlying object. For $d\in \mathbb{Z}$, we set $F\mathcal{A}(\leq d)=s^dF\mathcal{A}(\leq 0)$ and $F\mathcal{A}(\geq d)=s^dF\mathcal{A}(\geq 0)$.

The following result is analogous to \cite[Lemma 2.7]{AR}.

\begin{lem}\label{lem:f}
The following statements hold.
\begin{enumerate}
\item $F\mathcal{A}(\leq 0)\subseteq F\mathcal{A}(\leq 1)$, $F\mathcal{A}(\geq 1)\subseteq F\mathcal{A}(\geq 0)$, and $F\mathcal{A}=\bigcup_{n\in \mathbb{Z}} F\mathcal{A}(\leq n)= \bigcup_{n\in \mathbb{Z}} F\mathcal{A}(\geq n)$.
\item We have $s(\alpha_{X_\bullet})=\alpha_{s(X_\bullet)}$ for each filtered object $X_\bullet$.
\item For any filtered objects $X_\bullet\in F\mathcal{A}(\geq 1)$ and $Y_\bullet\in F\mathcal{A}(\leq 0)$, we have ${\rm Hom}_{F\mathcal{A}}(X_\bullet, Y_\bullet)=0$. Moreover, we have an isomorphism
    \begin{align*}
    {\rm Hom}_{F\mathcal{A}}(s(Y_\bullet), X_\bullet) \stackrel{\sim}\longrightarrow     {\rm Hom}_{F\mathcal{A}}(Y_\bullet, X_\bullet)
    \end{align*}
    sending $f_\bullet$ to $f_\bullet \circ \alpha_{Y_\bullet}$.
\item Any filtered object $X_\bullet$ fits into a conflation $A_\bullet \rightarrow X_\bullet \rightarrow B_\bullet$ with $A_\bullet\in F\mathcal{A}(\geq 1)$ and $B_\bullet \in F\mathcal{A}(\leq 0)$.
    \item The functor $j$ induces an equivalence $j\colon \mathcal{A}\stackrel{\sim}\longrightarrow F\mathcal{A}(\leq 0)\cap F\mathcal{A}(\geq 0)$ of exact categories.
\end{enumerate}
\end{lem}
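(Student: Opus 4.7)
My plan is that all five parts of the lemma reduce to direct unpacking of the explicit component-wise definitions of $s$, $j$, $\alpha$, and of membership in $F\mathcal{A}(\leq n)$ and $F\mathcal{A}(\geq n)$. The key observation to keep in mind is that $X_\bullet\in F\mathcal{A}(\leq n)$ if and only if $X_m=0$ for $m>n$, while $X_\bullet\in F\mathcal{A}(\geq n)$ if and only if the components equal the underlying object with identity transitions for $m\leq n$. Granting this, part (1) will be immediate: the two inclusions follow by comparing the support conditions, and the union statements are just restatements of the defining property that a filtered object is eventually zero above and eventually constant below. Part (2) will be a one-line verification, since both $s(\alpha_{X_\bullet})_n$ and $(\alpha_{s(X_\bullet)})_n$ evaluate to $i_{n-1}^X$.

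For part (3), I would first prove the vanishing ${\rm Hom}_{F\mathcal{A}}(X_\bullet,Y_\bullet)=0$ by observing that each component $f_n$ must vanish for $n\geq 1$ because $Y_n=0$, and then propagating this downward using the compatibility squares, exploiting that $i_n^X={\rm Id}$ for $n\leq 1$. For the Hom-isomorphism I would construct an explicit inverse sending $g_\bullet\colon Y_\bullet\to X_\bullet$ to the morphism $f_\bullet\colon s(Y_\bullet)\to X_\bullet$ defined by $f_n=g_{n-1}$ for $n\leq 1$ and $f_n=0$ for $n\geq 2$. Both the well-definedness of $f_\bullet$ and the two mutual-inverse identities should reduce, via the identity transitions of $X_\bullet$ at levels $\leq 1$ and the vanishing of $Y_m$ for $m\geq 1$, to the compatibility squares already satisfied by $g_\bullet$ and $f_\bullet$.

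For part (4) my plan is an explicit truncation: set $A_n=X_n$ for $n\geq 1$ and $A_n=X_1$ with identity transitions for $n\leq 1$, with the canonical morphism $A_\bullet\to X_\bullet$ being the identity in the upper range and the composed inflation $X_1\to X_n$ in the lower range; taking $B_\bullet$ to be the levelwise cokernel will then produce the required conflation, with $A_\bullet\in F\mathcal{A}(\geq 1)$ and $B_\bullet\in F\mathcal{A}(\leq 0)$ by inspection, and the preceding lemma confirming that levelwise conflations give conflations in $F\mathcal{A}$. For part (5), any object of $F\mathcal{A}(\leq 0)\cap F\mathcal{A}(\geq 0)$ is forced by the combined support and identity conditions to have exactly the shape of $j(X_0)$, which gives essential surjectivity; full faithfulness and exactness of $j$ are already on hand.

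The hard part will be the bookkeeping in part (3), particularly verifying that the candidate inverse $f_\bullet$ respects the compatibility squares at the boundary indices $n=1,2$ where the identity range of $X_\bullet$ and the vanishing range of $Y_\bullet$ interact; this is where the definition of the shift $s$ and the support conditions defining $F\mathcal{A}(\leq 0)$ and $F\mathcal{A}(\geq 1)$ all enter in an essential way.
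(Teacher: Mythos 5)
Your proposal is correct and follows essentially the same route as the paper: parts (1), (2), (3), (5) are direct unwindings of the component-wise definitions, and for (4) you give exactly the same explicit truncation construction (set $A_n=X_n$ for $n\geq 1$, $A_n=X_1$ with identity transitions below, and take the levelwise cokernel for $B_\bullet$). The paper handles (3) by noting that both Hom groups are canonically identified with ${\rm Hom}_\mathcal{A}(\omega(Y_\bullet),\omega(X_\bullet))$, whereas you exhibit an explicit inverse $g_\bullet\mapsto f_\bullet$ and check the boundary indices; these are the same argument in different clothing, and your version is verified correctly.
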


\begin{proof}
The statements (1), (2) and (3) are direct. We mention that in the isomorphism of (3), both the Hom groups are isomorphic to ${\rm Hom}_\mathcal{A}(\omega(Y_\bullet), \omega(X_\bullet))$. The statement (5) is direct, since the exact functor $j$ is fully faithful and reflects conflations.

For (4), we consider a filtered object $(X_\bullet, i_\bullet)$.  Set $A_{n}=X_n$ and $i_n^A=i_n$ for $n\geq 2$, $A_{n}=X_1$ and $i_n^A={\rm Id}_{X_1}$ for $n\leq 1$. For each $n<1$, we denote by $X_n/{X_1}$ the cokernel of the inflation $i_{n+1}\circ \cdots \circ i_0\circ i_1\colon X_{1}\rightarrow X_n$, and by $\bar{i}_n\colon X_n/{X_1}\rightarrow X_{n-1}/{X_1}$ the induced morphism of $i_n$, which is also an inflation. Set $B_n=0$ for $n\geq 1$, $B_{n}=X_n/{X_1}$ and $i_n^B=\bar{i}_n$ for $n\leq 0$. Then the canonical morphisms  $(A_\bullet, i_\bullet^A) \rightarrow X_\bullet$ and $X_\bullet \rightarrow (B_\bullet, i_\bullet^B)$ form the required conflation in $F\mathcal{A}$.
\end{proof}

The functor $j\colon \mathcal{A}\rightarrow F\mathcal{A}$ has a right adjoint and a left adjoint, both of which are exact. The functor $p\colon F\mathcal{A}\rightarrow \mathcal{A}$, which takes the zero component, is defined by $p(X_\bullet)=X_0$. We have the adjoint pair $(j, p)$ by the following natural isomorphism
 \begin{align}\label{equ:iso2}
 {\rm Hom}_{F\mathcal{A}} (j(A), X_\bullet) \stackrel{\sim}\longrightarrow {\rm Hom}_\mathcal{A}(A, p(X_\bullet))
 \end{align}
sending $f_\bullet$ to $f_0$. For a filtered object $(X_\bullet, i_\bullet)$ with its underlying object $X$, we consider the canonical inflation $i_{1, -\infty}\colon X_1\rightarrow X$, and set $c(X_\bullet)=X/X_1$ to be  its cokernel.  This gives rise to an additive functor $c\colon F\mathcal{A}\rightarrow \mathcal{A}$. The adjoint pair $(c, j)$ is given by the following natural isomorphism
 \begin{align}\label{equ:iso3}
 {\rm Hom}_{F\mathcal{A}} (X_\bullet, j(A)) \stackrel{\sim}\longrightarrow {\rm Hom}_\mathcal{A}(c(X_\bullet), A)
 \end{align}
sending $f_\bullet$ to the induced morphism $X/X_1\rightarrow A$ of $\omega(f_\bullet)\colon X\rightarrow A$. Here, we use the fact that $f_0\circ i_1=0$.

Recall that an exact category $\mathcal{A}$ is \emph{Frobenius} provided that it has enough projective objects and enough injective objects such that projective objects coincide with injective objects. We denote by $\underline{\mathcal{A}}$ the \emph{stable category} modulo projectives. For each object $X$, we fix a conflation $0\rightarrow X\stackrel{i_X} \rightarrow I(X)\stackrel{d_X}\rightarrow \mathbf{S}(X)\rightarrow 0$ with $I(X)$ injective. Then $\mathbf{S}$ yields an auto-equivalence $\mathbf{S}\colon \underline{\mathcal{A}}\rightarrow \underline{\mathcal{A}}$. The stable category $\underline{\mathcal{A}}$ has a canonical triangulated structure such that the translation functor is given by $\mathbf{S}$ and that exact triangles are induced by conflations. For details, we refer to \cite[Section I.2]{Hap}.

The proof of the following lemma is similar to \cite[Lemma 2.1]{Chen}.

\begin{lem}
Let $\mathcal{A}$ be a Frobenius category. Then the exact category $F\mathcal{A}$ is Frobenius. Moreover, a filtered object $X_\bullet$ is projective in $F\mathcal{A}$ if and only if each component $X_n$ is projective in $\mathcal{A}$.
\end{lem}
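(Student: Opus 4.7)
The plan is to exhibit, for each projective $P\in\mathcal{A}$ and each $n\in\mathbb{Z}$, a filtered object $s^nj(P)$ that is simultaneously projective and injective in $F\mathcal{A}$, then use finite direct sums of these to resolve any $X_\bullet$, and finally to identify projectivity of $X_\bullet$ with projectivity of each component $X_n$. The Frobenius property of $\mathcal{A}$ will enter twice: to identify the standard projectives with the standard injectives, and to split the filtration when all components are projective.

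First I would compute, using Lemma \ref{lem:f}(3) together with the adjunction (\ref{equ:iso2}), a natural isomorphism
\[
{\rm Hom}_{F\mathcal{A}}(s^nj(P),\,X_\bullet)\;\cong\;{\rm Hom}_\mathcal{A}(P,\,X_n).
\]
Since conflations in $F\mathcal{A}$ are exactly componentwise conflations in $\mathcal{A}$, this shows $s^nj(P)$ is projective in $F\mathcal{A}$ iff $P$ is projective in $\mathcal{A}$. Dually, combining (\ref{equ:iso3}) with Lemma \ref{lem:f}(3) and the identity $c(s^{-n}Y_\bullet)=Y/Y_{n+1}$ yields ${\rm Hom}_{F\mathcal{A}}(X_\bullet,\,s^nj(I))\cong {\rm Hom}_\mathcal{A}(X/X_{n+1},\,I)$; since forming the quotients $X/X_{n+1}$ sends a componentwise conflation of filtered objects to a conflation in $\mathcal{A}$ (a $3\times 3$-argument), $s^nj(I)$ is injective iff $I$ is. The Frobenius hypothesis on $\mathcal{A}$ then makes these two classes of standard objects coincide.

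For enough projectives, given $X_\bullet$ I would pick, for each $n$ in the (finite) support of the graded pieces, a deflation $\pi_n\colon P_n\to X_n/X_{n+1}$ with $P_n$ projective, and lift it across $X_n\to X_n/X_{n+1}$ to a morphism $q_n\colon P_n\to X_n$. The induced map $\bigoplus_n s^nj(P_n)\to X_\bullet$, whose level $m$ is $\bigoplus_{n\geq m}P_n\to X_m$, $(p_n)\mapsto\sum_{n\geq m}i_{n,m}\circ q_n(p_n)$, is a deflation: composing with $X_m\to X_m/X_{m+1}$ kills all components with $n>m$ (since $X_n\to X_m$ factors through $X_{m+1}$) and restricts to $\pi_m$ on $P_m$, and a descending induction on the length of the filtration promotes this top-layer surjectivity to a deflation at every level. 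Dually, choosing inflations $X_n/X_{n+1}\hookrightarrow I_n$ into injectives in $\mathcal{A}$ and combining them via injectivity produces an inflation from $X_\bullet$ into a finite sum of objects $s^nj(I_n)$. For projective equals injective in $F\mathcal{A}$, any projective $X_\bullet$ splits off its projective cover, hence is a summand of a sum of objects $s^nj(P_n)$ which are also injective, and so is itself injective; dually for injectives. This establishes that $F\mathcal{A}$ is Frobenius.

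It remains to prove the componentwise characterization. The direction $(\Rightarrow)$ is immediate from the description of projectives as summands of sums $\bigoplus_i s^{n_i}j(P_i)$: taking level $m$ shows $X_m$ is a summand of $\bigoplus_{n_i\geq m}P_i$, hence projective. For $(\Leftarrow)$, assume each $X_n$ is projective, hence injective by the Frobenius property. Then each conflation $X_{n+1}\rightarrowtail X_n\twoheadrightarrow X_n/X_{n+1}$ splits by injectivity of $X_{n+1}$. The main technical obstacle is to choose these splittings consistently so that the resulting level-wise isomorphisms $X_m\cong\bigoplus_{k\geq m}X_k/X_{k+1}$ commute with the inflations $X_{n+1}\to X_n$, and thereby glue to an isomorphism $X_\bullet\cong\bigoplus_n s^nj(X_n/X_{n+1})$ of filtered objects. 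This is handled by descending induction on $n$: at each step one chooses a section of $X_n\twoheadrightarrow X_n/X_{n+1}$ extending the previously constructed direct sum decomposition of $X_{n+1}$, with compatibility with the inflation built in by construction. The right-hand side is a sum of standard projectives, so $X_\bullet$ is projective.
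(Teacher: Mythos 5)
Your argument is correct and follows essentially the same route as the paper: use the adjunctions (\ref{equ:iso2}) and (\ref{equ:iso3}), together with exactness of $p$, $c$, $j$, $s$, to show that the objects $s^nj(P)$ are simultaneously projective and injective, then resolve an arbitrary $X_\bullet$ by such objects via the filtration with graded pieces $s^lj(X_l/X_{l+1})$ --- the paper packages your descending induction as a repeated application of the Horseshoe argument. You spell out the componentwise ``Moreover'' characterization (in particular the consistent choice of sections yielding $X_\bullet\cong\bigoplus_n s^nj(X_n/X_{n+1})$ when all components are projective) more explicitly than the paper, which compresses it into ``we are done by combining the above statements,'' but the underlying mechanism is the same.
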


\begin{proof}
We observe that the functor $p$ is exact. It follows from the adjunction (\ref{equ:iso2})  that $j(P)$ is projective for any projective object $P$ in $\mathcal{A}$. Therefore, for each $d$,  $s^dj(P)$ is projective. For a filtered object $(X_\bullet, i_\bullet)$, there exist sufficiently large $a$ and $b$ such that $X_n=0$ for $n>a$, $X_n=X$ and $i_n={\rm Id}_X$ for $n\leq -b$. For each $-b\leq l \leq  a$, we denote by $X_l/X_{l+1}$ the cokernel of $i_{l+1}\colon X_{l+1}\rightarrow X_l$. Take a deflation $P_l\rightarrow X_l/{X_{l+1}}$ in $\mathcal{A}$ with $P_l$ projective. Thus we have a deflation $s^lj(P_l)\rightarrow s^lj(X_l/X_{l+1})$. We claim that there is a deflation $P_\bullet=\bigoplus_{l=-b}^{a} s^lj(P_l)\rightarrow (X_\bullet, i_\bullet)$ in $F\mathcal{A}$.

Indeed, there is a sequence of inflations in $F\mathcal{A}$
$$0=Y^{a+1}_\bullet\longrightarrow Y_\bullet^{a} \longrightarrow Y_\bullet^{a-1}\longrightarrow \cdots \longrightarrow   Y^{-b+1}_\bullet \longrightarrow Y^{-b}_\bullet= X_\bullet$$
 with each factor isomorphic to $s^lj(X_l/{X_{l+1}})$ for $l=a, a-1, \cdots, -b$. More precisely, we have $Y_n^l=X_l$ for $n\leq l$, and $Y_n^l=X_n$ for $n>l$. We apply repeatedly the argument in the Horseshoe Lemma to the deflations  $s^lj(P_l)\rightarrow s^lj(X_l/X_{l+1})$. Then we have the required deflation.

Similarly, using the exact functor $c$ and (\ref{equ:iso3}), we infer that for each projective object $P$ in $\mathcal{A}$, $s^dj(P)$ is injective. Moreover, each filtered object $X_\bullet$ fits into an inflation $X_\bullet\rightarrow P_\bullet$ with $P_\bullet$ a finite direct sum of objects of the form $s^dj(P)$. We are done by combining the above statements.
\end{proof}

The above lemmas allow us to apply the formalism in \cite[Appendix]{Bei} to an algebraical triangulated category.

\vskip 10pt

\noindent \emph{Proof of Proposition \ref{prop:real}.}\;  Take a Frobenius category $\mathcal{A}$ such that its stable category $\underline{\mathcal{A}}$ is triangle equivalent to $\mathcal{D}$. Consider the Frobenius category $F\mathcal{A}$ of filtered objects and its stable category $\underline{F\mathcal{A}}$. The above functors $j$, $s$ and $\omega$ are exact that send projective objects to projective objects. By \cite[Lemma I.2.8]{Hap} they induce the corresponding triangle functors between the stable categories. The stable version of Lemma \ref{lem:f} holds, where the conflation in (4) is replaced by an exact triangle and the equivalence in (5) induces a triangle equivalence. It follows that the stable category $\underline{F\mathcal{A}}$ is a filtered triangulated category over $\underline{\mathcal{A}}$ and thus over $\mathcal{D}$; see \cite[Definition A.1]{Bei}. Then the existence of the realization functor $F$ follows from \cite[Subsection A.6]{Bei}. \hfill $\square$

\section{Paths in triangulated categories}

Let  $\mathcal{D}$ be a triangulated category. We will assume from now on
that every object in $\mathcal{D}$ is a finite direct
sum of indecomposable objects. Denote by ${\rm Ind}\; \mathcal{D}$ a complete class of representatives of indecomposable objects. We emphasize that $\mathcal{D}$ is not assumed to have split idempotents.

\subsection{Paths and blocks}

 Let $X, Y$ be two indecomposable objects in $\mathcal{D}$.
A \emph{path} of length $n$ is a sequence $X_0, X_1, \dots , X_n$ of
indecomposable objects in $\mathcal{D}$ such that
for $1 \leq i \leq n$, we have
${\rm Hom}_\mathcal{D}(X_{i-1},X_i) \neq 0$ or $X_i = X_{i-1}[1]$. We will say that the path \emph{starts} at $X_0$ and \emph{ends} in $X_n$, or that it is a path from $X_0$ to $X_n$.

A subclass $\mathcal{U}\subseteq {\rm Ind}\; \mathcal{D}$ is called  \emph{path-closed} provided that for each path from $X$ to $Y$, $X$ lies in $\mathcal{U}$ if and only if so does $Y$. Equivalently, the class $\mathcal{U}$ is closed under the translation functors $[1]$ and $[-1]$, and if for any $X\in \mathcal{U}$, an indecomposable object $Y$ necessarily lies in $\mathcal{U}$ whenever ${\rm Hom}_\mathcal{D}(X, Y)\neq 0$ or ${\rm Hom}_\mathcal{D}(Y, X)\neq 0$. We observe if $\mathcal{U}$ is path-closed, so is the complement $\mathcal{V}={\rm Ind}\; \mathcal{D}\backslash \mathcal{U}$.

A subclass $\mathcal{U}\subseteq {\rm Ind}\; \mathcal{D}$ is called  \emph{path-connected} provided that any two indecomposable objects in $\mathcal{U}$ are connected by a sequence of paths and inverse paths. More precisely, for each pair $X, Y$ of objects in $\mathcal{U}$, there exists a sequence $X=X_0, X_1, \cdots, X_t=Y$ of indecomposable objects such that for $1\leq i\leq t$, at least one of the three conditions ${\rm Hom}_\mathcal{D}(X_{i-1}, X_i)\neq 0$, ${\rm Hom}_\mathcal{D}(X_i, X_{i+1})\neq 0$, or $X_i=X_{i-1}[s]$ for some $s\in \mathbb{Z}$, is satisfied.

\begin{lem}\label{lem:decomposition}
Let $\mathcal{U}\subseteq {\rm Ind}\; \mathcal{D}$ a path-closed class and let $\mathcal{V}$ its complement. Set $\mathcal{D}_1={\rm add}\; \mathcal{U}$ and $\mathcal{D}_2={\rm add}\; \mathcal{V}$. Then both $\mathcal{D}_i$ are triangulated subcategories, and $\mathcal{D}=\mathcal{D}_1\times \mathcal{D}_2$ is their product.
\end{lem}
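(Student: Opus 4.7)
The plan is to establish, in order: (i) the Hom-vanishing ${\rm Hom}_{\mathcal{D}}(\mathcal{D}_1,\mathcal{D}_2)=0={\rm Hom}_{\mathcal{D}}(\mathcal{D}_2,\mathcal{D}_1)$; (ii) closure of each $\mathcal{D}_i$ under cones; (iii) the product identification $\mathcal{D}\simeq\mathcal{D}_1\times\mathcal{D}_2$ as triangulated categories. The main obstacle will be (ii), where the combinatorial information carried by paths has to be turned into a statement about third vertices of exact triangles; this is precisely the kind of input that Lemma \ref{lem:basic}(1) was tailored to supply.

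For (i), since every object of $\mathcal{D}$ is a finite direct sum of indecomposables and ${\rm Ind}\;\mathcal{D}=\mathcal{U}\sqcup\mathcal{V}$, it suffices to check the vanishing for indecomposable $X\in\mathcal{U}$ and $Y\in\mathcal{V}$. A nonzero morphism $X\rightarrow Y$ or $Y\rightarrow X$ would be a length-one path, forcing $X$ and $Y$ to lie in the same part of the partition by path-closedness of $\mathcal{U}$ (and of its complement $\mathcal{V}$), a contradiction. A corollary that will be used constantly is that every morphism in $\mathcal{D}$, after splitting source and target along $\mathcal{D}_1\oplus\mathcal{D}_2$, is block-diagonal.

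For (ii), closure of $\mathcal{D}_i$ under $[1]$ and $[-1]$ is immediate from stability of path-closed classes under the translation functors. For cones, suppose $X\stackrel{u}\rightarrow Y\rightarrow Z\rightarrow X[1]$ is exact with $X,Y\in\mathcal{D}_1$; write $Z=Z_1\oplus Z_2$ with $Z_i\in\mathcal{D}_i$. The Hom-vanishing forces the $Z_2$-component of $Y\rightarrow Z$ to be zero and makes ${\rm Hom}_{\mathcal{D}}(Z_2,X[1])=0$, so Lemma \ref{lem:basic}(1) yields $Z_2=0$. Rotations of the triangle cover the other two cases in which two of the three vertices lie in $\mathcal{D}_1$, and swapping the roles of $\mathcal{U}$ and $\mathcal{V}$ gives the analogous statement for $\mathcal{D}_2$.

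For (iii), define $\Phi\colon\mathcal{D}_1\times\mathcal{D}_2\rightarrow\mathcal{D}$ by $(X_1,X_2)\mapsto X_1\oplus X_2$, which commutes strictly with the translation functors. Essential surjectivity is the object decomposition from (i), and fully-faithfulness drops out of the Hom-vanishing since the off-diagonal blocks of ${\rm Hom}_{\mathcal{D}}(X_1\oplus X_2,Y_1\oplus Y_2)$ are zero. To match triangulated structures, take any exact triangle $X\rightarrow Y\rightarrow Z\rightarrow X[1]$ in $\mathcal{D}$, observe that all three morphisms are block-diagonal, complete each diagonal entry $X_i\rightarrow Y_i$ to a triangle inside $\mathcal{D}_i$ (possible by (ii)), and check that the direct sum of these two triangles is isomorphic to the original one. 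The isomorphism of third vertices is provided by the standard uniqueness-of-cones consequence of TR3, and any such isomorphism between $\mathcal{D}_1\oplus\mathcal{D}_2$-decomposed objects is itself block-diagonal, so each $Z_i$ is identified with a genuine cone in $\mathcal{D}_i$, finishing the proof.
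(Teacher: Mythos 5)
Your proposal is correct and follows essentially the same route as the paper: deduce the Hom-vanishing from path-closedness, use Lemma \ref{lem:basic}(1) to show each $\mathcal{D}_i$ is closed under cones, and conclude the product decomposition. The only difference is that you spell out the verification that exact triangles in $\mathcal{D}$ are precisely direct sums of triangles from $\mathcal{D}_1$ and $\mathcal{D}_2$, a step the paper treats as immediate once the additive decomposition and closure under cones are in hand.
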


\begin{proof}
By their path-closedness, we have ${\rm Hom}_\mathcal{D}(\mathcal{D}_1, \mathcal{D}_2)=0={\rm Hom}_{\mathcal{D}}(\mathcal{D}_2, \mathcal{D}_1)$. Then we have the decomposition $\mathcal{D}=\mathcal{D}_1\times \mathcal{D}_2$ of additive categories. We observe that both $\mathcal{D}_i$ are closed under $[1]$ and $[-1]$. To complete the proof, we take a morphism $u\colon A\rightarrow B$ in $\mathcal{D}_1$ and form an exact triangle $A\stackrel{u}\rightarrow B\rightarrow C\rightarrow A[1]$. We assume that $C=C_1\oplus C_2$ with $C_i\in \mathcal{D}_i$. Then $C_2=0$ by Lemma \ref{lem:basic}(1). This proves that $\mathcal{D}_1$ is a triangulated subcategory.
\end{proof}

The triangulated category $\mathcal{D}$ is called a \emph{block} provided that it is nonzero and does not admit a proper decomposition into two triangulated subcategories.

\begin{prop}\label{prop:block}
 Let $\mathcal{D}$ be a nonzero triangulated category. Then the following statements are equivalent:
\begin{enumerate}
\item The triangulated category $\mathcal{D}$ is a block.
 \item The only  nonempty path-closed subset $\mathcal{U}\subseteq {\rm Ind}\; \mathcal{D}$ is  ${\rm Ind}\; \mathcal{D}$ itself.
     \item The set ${\rm Ind}\; \mathcal{D}$ is path-connected.
\end{enumerate}
\end{prop}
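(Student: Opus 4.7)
The plan is to prove the equivalences by carefully unpacking the definitions, with the main technical input being Lemma \ref{lem:decomposition}. The structure I would follow is $(1) \Leftrightarrow (2)$ and then $(2) \Leftrightarrow (3)$.

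For $(1) \Rightarrow (2)$, I would take a nonempty path-closed subset $\mathcal{U} \subseteq {\rm Ind}\;\mathcal{D}$ with complement $\mathcal{V}$, which is also path-closed by the remark immediately following the definition. Applying Lemma \ref{lem:decomposition} gives $\mathcal{D} = \mathcal{D}_1 \times \mathcal{D}_2$ with $\mathcal{D}_1 = {\rm add}\;\mathcal{U}$ nonzero (since $\mathcal{U}$ is nonempty). The block hypothesis forces $\mathcal{D}_2 = 0$, hence $\mathcal{V}$ is empty and $\mathcal{U} = {\rm Ind}\;\mathcal{D}$.

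For $(2) \Rightarrow (1)$, I would suppose $\mathcal{D} = \mathcal{D}_1 \times \mathcal{D}_2$ is a decomposition into triangulated subcategories and set $\mathcal{U}_i = \mathcal{D}_i \cap {\rm Ind}\;\mathcal{D}$. Each $\mathcal{U}_i$ is path-closed: it is closed under $[\pm 1]$ since $\mathcal{D}_i$ is triangulated, and closed under nonzero Homs in either direction because ${\rm Hom}_\mathcal{D}(\mathcal{D}_1, \mathcal{D}_2) = 0 = {\rm Hom}_\mathcal{D}(\mathcal{D}_2, \mathcal{D}_1)$ forces an indecomposable receiving or sending a nonzero map from an object of $\mathcal{D}_i$ to itself lie in $\mathcal{D}_i$. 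Here I would use the standing assumption that every object is a finite direct sum of indecomposables to conclude that $\mathcal{U}_i$ is empty iff $\mathcal{D}_i = 0$. Then (2) applied to a nonzero $\mathcal{U}_1$ gives $\mathcal{U}_1 = {\rm Ind}\;\mathcal{D}$, whence $\mathcal{U}_2 = \emptyset$ and $\mathcal{D}_2 = 0$, so $\mathcal{D}$ is a block.

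For $(3) \Rightarrow (2)$, I would take any nonempty path-closed $\mathcal{U}$, fix $X \in \mathcal{U}$, and show $Y \in \mathcal{U}$ for every indecomposable $Y$ by walking along the path-connecting sequence from $X$ to $Y$: at each step, the successor lies in $\mathcal{U}$ by path-closedness under nonzero Homs (in either direction) and under translations. For $(2) \Rightarrow (3)$, I would fix an $X_0 \in {\rm Ind}\;\mathcal{D}$ and define $\mathcal{U}_{X_0}$ to be the set of indecomposables connected to $X_0$ by a finite alternating sequence of nonzero Homs (in either direction) and translations. This is path-closed by construction (a nonzero Hom or translation extends the connecting sequence) and nonempty (it contains $X_0$), so by (2) it equals ${\rm Ind}\;\mathcal{D}$, yielding path-connectedness.

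The proof is essentially a bookkeeping argument once Lemma \ref{lem:decomposition} is in hand; the only point that requires care is the nonemptiness of $\mathcal{U}_i$ in $(2) \Rightarrow (1)$, where one must invoke the standing hypothesis that every object decomposes as a finite direct sum of indecomposables (noting that split idempotents are not assumed, so one cannot argue via retracts of arbitrary objects). This is the mild subtlety to flag, but I do not anticipate it as a real obstacle.
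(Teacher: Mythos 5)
Your proof is correct and uses essentially the same ingredients as the paper: Lemma \ref{lem:decomposition} for $(1)\Rightarrow(2)$, the ``connected component'' construction for $(2)\Rightarrow(3)$, and orthogonality of the factors in a product decomposition. The only difference is in the arrangement of implications: you prove the four arrows of $(1)\Leftrightarrow(2)\Leftrightarrow(3)$, whereas the paper closes a cycle $(1)\Rightarrow(2)\Rightarrow(3)\Rightarrow(1)$; the paper's $(3)\Rightarrow(1)$ is a shorter contradiction argument resting on the same orthogonality observation that underlies your $(2)\Rightarrow(1)$, and your flag of the standing direct-sum-of-indecomposables hypothesis is correct and appropriate.
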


\begin{proof}
The implication ``$(1)\Rightarrow (2)$" follows from Lemma \ref{lem:decomposition}. For ``$(2)\Rightarrow (3)$", let $X$ be an indecomposable object. Denote by $\mathcal{X}\subseteq {\rm Ind}\; \mathcal{D}$ the class formed by those objects $Y$, which can be connected to $X$ by a sequence of paths and inverse paths. We observe that $\mathcal{X}$ is path-closed. Then we have $\mathcal{X}={\rm Ind}\; \mathcal{D}$. This proves that ${\rm Ind}\; \mathcal{D}$ is path-connected.

To prove ``$(3)\Rightarrow (1)$", we assume on the contrary that $\mathcal{D}=\mathcal{D}_1\times \mathcal{D}_2$ is a proper decomposition. Both $\mathcal{D}_i$'s contain indecomposable objects. Take indecomposable objects $X\in \mathcal{D}_1$ and $Y\in \mathcal{D}_2$. Then there are no sequences which connect $X$ with $Y$ and consist  of paths and inverse paths. This contradicts to the path-connectedness.
\end{proof}

For a division ring $D$ and $n\geq 1$, consider the direct product $D^n=D\times \cdots \times D$ of $n$ copies of $D$. The module category $D^n\mbox{-mod}$ is semisimple. An automorphism $\sigma$ on $D$ yields an automorphism $\sigma^n\colon D^n\rightarrow D^n$ sending $(x_1, x_2, \cdots, x_n)$ to $(x_2, \cdots, x_n, \sigma(x_1))$. We denote by $(\sigma^n)^*\colon D^n\mbox{-mod}\rightarrow D^n\mbox{-mod}$ the automorphism of twisting the $D^n$-action by $\sigma^n$.

Recall from \cite[Lemma 3.4]{Chen11} that any semisimple abelian category $\mathcal{A}$ becomes a triangulated category with the translation functor being any prescribed auto-equivalence $\Sigma$ on $\mathcal{A}$. The exact triangles are direct sums of trivial ones. The resulted triangulated category is denoted by $(\mathcal{A}, \Sigma)$. In particular, we have the triangulated category $(D^n\mbox{-mod}, (\sigma^n)^*)$.

We say that a block $\mathcal{D}$ is \emph{degenerate} provided that there is an indecomposable object $X$ satisfying the condition: any nonzero morphisms $Y\rightarrow X$ and $X\rightarrow Y'$, with $Y, Y'$ indecomposable, are invertible.

\begin{lem}\label{lem:dege}
Let $\mathcal{D}$ be a degenerate block with the above indecomposable object $X$. Then the following statements hold.
\begin{enumerate}
\item ${\rm Ind}\; \mathcal{D}=\{X[s]\; |\; s\in \mathbb{Z}\}$ and ${\rm End}_\mathcal{D}(X)=D^{\rm op}$, where $D$ is a division ring.
\item If $X$ is not isomorphic to $X[s]$ for each $s>0$, then there is a triangle equivalence $\mathcal{D}\stackrel{\sim}\longrightarrow \mathbf{D}^b(D\mbox{-{\rm mod}})$.
\item If $n$ is the smallest natural number such that $X$ is isomorphic to $X[n]$, then there is a triangle equivalence $\mathcal{D}\stackrel{\sim}\longrightarrow (D^n\mbox{-{\rm mod}}, (\sigma^n)^*)$ for some automorphism $\sigma$ of $D$.
\end{enumerate}
\end{lem}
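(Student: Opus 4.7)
The plan is to establish the three statements in turn, relying on Proposition \ref{prop:block} and the observation that the degeneracy assumption forces every morphism between indecomposables in $\mathcal{D}$ to be either zero or invertible.

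For (1), I will first note that ${\rm End}_\mathcal{D}(X)$ is a division ring: taking $Y=X$ (respectively $Y'=X$) in the degeneracy condition shows every nonzero endomorphism of $X$ is invertible. Set $D={\rm End}_\mathcal{D}(X)^{\rm op}$. For the classification of indecomposables, consider $\mathcal{U}=\{X[s]\mid s\in \mathbb{Z}\}$, which is nonempty and closed under $[\pm 1]$. If $Y$ is an indecomposable admitting a nonzero morphism to or from some $X[s]$, then shifting by $[-s]$ produces a nonzero morphism $Y[-s]\to X$ or $X\to Y[-s]$, which is invertible by degeneracy, whence $Y[-s]\cong X$ and $Y\in \mathcal{U}$. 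Thus $\mathcal{U}$ is path-closed, and since $\mathcal{D}$ is a block, Proposition \ref{prop:block} forces $\mathcal{U}={\rm Ind}\;\mathcal{D}$.

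For (2), the hypothesis guarantees that the $X[s]$ are pairwise non-isomorphic. The same invertibility argument yields ${\rm Hom}_\mathcal{D}(X,X[r])=0$ for $r\neq 0$, so ${\rm Hom}_\mathcal{D}(X[s],X[t])$ equals $D^{\rm op}$ if $s=t$ and vanishes otherwise. Given any exact triangle $A\to B\to C\stackrel{w}\to A[1]$, decomposing $A$ and $C$ into their $X[s]$-components shows $w$ has only components of the form $X[s]^{c_s}\to X[s+1]^{a_{s+1}}$, all of which vanish; hence every exact triangle in $\mathcal{D}$ is a direct sum of trivial triangles. The corresponding statement in $\mathbf{D}^b(D\mbox{-{\rm mod}})$ follows from Example \ref{exm:can} and Lemma \ref{lem:split}. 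I will then define $F\colon \mathbf{D}^b(D\mbox{-{\rm mod}})\to \mathcal{D}$ as the additive functor sending $D[s]\mapsto X[s]$, transporting the canonical identifications ${\rm End}(D[s])\cong D^{\rm op}\cong{\rm End}(X[s])$ via $[s]$. It is clearly an additive equivalence commuting with $[1]$, and since every distinguished triangle in either category is a direct sum of trivial triangles, such an $F$ automatically preserves them, hence is a triangle equivalence.

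For (3), pick an isomorphism $\phi\colon X\to X[n]$. By minimality of $n$, the objects $X[0],\ldots,X[n-1]$ are pairwise non-isomorphic indecomposables, and the degeneracy argument gives ${\rm Hom}_\mathcal{D}(X[s],X[t])=0$ whenever $s\not\equiv t\pmod{n}$. To see every exact triangle is a direct sum of trivial ones, note that any morphism between direct sums of indecomposables decomposes by Hom-vanishing into blocks within a single isotypical class; each such block is a matrix over the division ring $D^{\rm op}$ and can be reduced to ``identity plus zero'' form by automorphisms of source and target (Smith normal form), whence its cone is visibly a direct sum of trivial triangles. To build the equivalence, define $\sigma\in{\rm Aut}(D)$ via the conjugation rule $d\mapsto \phi^{-1}\circ d[n]\circ\phi$ on ${\rm End}(X)=D^{\rm op}$ (identifying automorphisms of $D$ with those of $D^{\rm op}$), and define $F\colon (D^n\mbox{-{\rm mod}},(\sigma^n)^*)\to \mathcal{D}$ by sending the $i$-th simple $S_i$ to $X[i-1]$ and matching endomorphism rings. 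The verification that $F$ intertwines the translations reduces to comparing the cyclic permutation $S_1\mapsto S_2\mapsto\cdots\mapsto S_n\mapsto S_1$ together with the $\sigma$-twist that $(\sigma^n)^*$ imposes on wrap-around, against the cyclic structure of $[1]$ on $X[0],\ldots,X[n-1]$, where the wrap-around $X[n-1]\mapsto X[n]\cong X$ is mediated by $\phi$ and so introduces the same twist by $\sigma$. Since all distinguished triangles in both categories are direct sums of trivial ones, this additive equivalence is automatically triangulated.

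The main obstacle lies in (3): identifying $\sigma$ coherently from the chosen iso $\phi$ and verifying that the twist appearing in $(\sigma^n)^*$ matches the wrap-around automorphism of ${\rm End}(X)$ on the nose. Conventions (left versus right modules, direction of the cyclic shift, $D$ versus $D^{\rm op}$) must be tracked carefully, and one should observe that replacing $\phi$ by another isomorphism alters $\sigma$ only by an inner automorphism of $D$, so that $(D^n\mbox{-{\rm mod}},(\sigma^n)^*)$ is well-defined as a triangulated category up to triangle equivalence.
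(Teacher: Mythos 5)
Your approach matches the paper's: establish that $\mathcal{U}=\{X[s]\mid s\in\mathbb{Z}\}$ is path-closed using the degeneracy condition (shifted to $X$), invoke Proposition~\ref{prop:block}(2), then construct the equivalences explicitly. The paper itself simply declares (2) and (3) ``evident'' and omits the details, only remarking that $\sigma$ is induced by the action of $[n]$ on morphisms, so your write-up is considerably fuller. Your treatment of (1) and (3) is sound, including the Smith-normal-form reduction and the observation that $\sigma$ is well defined up to inner automorphism.

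There is, however, a genuine error in your argument for (2). You claim that for any exact triangle $A\to B\to C\stackrel{w}\to A[1]$, the decomposition of $C$ and $A[1]$ into $X[s]$-isotypical pieces forces all components of $w$ to vanish. This is false: writing $A=\bigoplus_s X[s]^{a_s}$ gives $A[1]=\bigoplus_s X[s]^{a_{s-1}}$, so the Hom-vanishing between distinct shifts leaves the components $X[s]^{c_s}\to X[s]^{a_{s-1}}$ \emph{alive}, not dead. (You appear to have misindexed $A[1]$ and compared the $X[s]$-part of $C$ against the $X[s+1]$-part of $A[1]$, which indeed has no morphisms, but that is the wrong pairing.) Concretely, the rotation $X\to 0\to X[1]\stackrel{\mathrm{Id}}\to X[1]$ is an exact triangle with nonzero connecting map; if $w$ always vanished, every morphism would be split mono, which is absurd (take the zero endomorphism of $X$). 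The conclusion you want --- that every exact triangle is a direct sum of rotations of $U\stackrel{\mathrm{Id}}\to U\to 0$ --- is nonetheless correct, but the right proof is precisely the Smith-normal-form argument you give in (3): since each block of a morphism is a matrix over a division ring, the morphism is isomorphic to a direct sum of identities and zeros, and its cone decomposes accordingly. That argument works verbatim in the setting of (2) (where the isotypical index is $\mathbb{Z}$ rather than $\mathbb{Z}/n$), so you should replace the ``$w=0$'' step by a pointer to it.
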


\begin{proof}
From the assumption, we observe that $\mathcal{U}=\{X[s]\; |\; s\in \mathbb{Z}\}$ is path-closed. Then (1) follows from Proposition \ref{prop:block}(2) immediately. The equivalences in (2) and (3) are evident. We omit the details. We mention that in (3), the automorphism $\sigma$ on $D$ is induced by the action of $[n]$ on morphisms in $\mathcal{D}$.
\end{proof}

\subsection{The existence of paths} We will study paths in a non-degenerate block. We keep the assumption that  in the triangulated category $\mathcal{D}$, any object is a finite direct sum of indecomposable objects.

\begin{lem}\label{lem:often}
Let $u\colon X\rightarrow Y$ be a nonzero non-invertible morphism between indecomposable objects in $\mathcal{D}$. Then there is an exact triangle $$X\stackrel{u}\longrightarrow Y \stackrel{\binom{*}{v}}\longrightarrow Z'\oplus Z \stackrel{(*, w)}\longrightarrow X[1]$$
such that $Z$ is indecomposable and that both morphisms $v$ and $w$ are nonzero non-invertible.
\end{lem}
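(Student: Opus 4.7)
The plan is to realize the required triangle as the cone of $u$ with its cone decomposed into indecomposables, and then to single out the right indecomposable summand to serve as $Z$. I would complete $u$ to an exact triangle $X\xrightarrow{u}Y\xrightarrow{f}C\xrightarrow{g}X[1]$, decompose $C=\bigoplus_{i=1}^{n}C_i$ into indecomposable summands using the standing hypothesis, and write the component maps $v_i\colon Y\to C_i$ of $f$ and $w_i\colon C_i\to X[1]$ of $g$. The statement then reduces to producing an index $i_0$ with $v_{i_0}$ and $w_{i_0}$ both nonzero and non-invertible; one then takes $Z=C_{i_0}$ and $Z'=\bigoplus_{j\ne i_0}C_j$.

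Three short arguments, each an application of Lemma \ref{lem:basic}, establish the needed properties of the components. First, no $v_i$ vanishes: if $v_{i_0}=0$, putting $C_{i_0}$ last in the decomposition presents the triangle in the form $X\to Y\xrightarrow{\binom{*}{0}}C'\oplus C_{i_0}\to X[1]$, and Lemma \ref{lem:basic}(2) with $A=X$ indecomposable and $C_2=C_{i_0}\neq 0$ forces $u=0$, a contradiction. Second, no $v_i$ is invertible: if $v_{i_0}$ were an iso, then $v_{i_0}^{-1}\pi_{i_0}\colon C\to Y$ is a left inverse of $f$, and applying it to the triangle identity $fu=0$ gives $u=0$. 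Third, no $w_i$ is invertible: by the symmetric argument, if $w_{i_0}$ were an iso, then $\iota_{i_0}w_{i_0}^{-1}\colon X[1]\to C$ is a right inverse of $g$, and the identity $u[1]\circ g=0$ in the rotated triangle then forces $u[1]=0$ and hence $u=0$.

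Finally, I would produce some $i_0$ with $w_{i_0}\ne 0$, after which the first observation guarantees $v_{i_0}\ne 0$ as well. The key point is to show $g\ne 0$ outright: if $g=0$, the exact triangle splits and yields an isomorphism $Y\simeq X\oplus C$ in $\mathcal{D}$; but $Y$ is indecomposable and $X\ne 0$, so $C=0$, making $u$ an isomorphism and contradicting the hypothesis. Picking any $i_0$ with $w_{i_0}\ne 0$ and setting $Z=C_{i_0}$, $Z'=\bigoplus_{j\ne i_0}C_j$, the resulting triangle $X\xrightarrow{u}Y\xrightarrow{\binom{*}{v_{i_0}}}Z'\oplus Z\xrightarrow{(*,w_{i_0})}X[1]$ is the one asked for. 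The main thing to handle carefully is the use of indecomposability at three different places---once to invoke Lemma \ref{lem:basic}(2), once in each inverse-existence argument (via the left/right inverse of $f$ or $g$), and once to rule out a nontrivial splitting $Y\simeq X\oplus C$---since the paper explicitly does not assume split idempotents, and these are the reductions the finite direct-sum hypothesis is designed to carry.
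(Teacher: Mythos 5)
Your proof is correct and follows essentially the same route as the paper's: form the cone, decompose it into indecomposables, use Lemma \ref{lem:basic}(2) to rule out $v_i=0$, and use the triangle identities $f\circ u=0$ and $u[1]\circ g=0$ to rule out invertible $v_i$ and $w_i$. The one minor divergence is in establishing $w_{i_0}\neq 0$: you show that $g\neq 0$ (via the splitting and indecomposability of $Y$) and then pick a nonzero component, whereas the paper applies the dual of Lemma \ref{lem:basic}(2), which in fact shows $w\neq 0$ for \emph{every} indecomposable summand $Z$ of the cone; both arguments are sound.
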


\begin{proof}
Since $u$ is non-invertible, its cone is not zero. Since $\binom{*}{v}\circ u=0$ and $u[1]\circ (*, w)=0$, we infer that both $v$ and $w$ are non-invertible. Since $u\neq 0$, Lemma \ref{lem:basic}(2) implies that $v\neq 0$. By a dual argument, we have $w\neq 0$.
\end{proof}

We also observe the dual of Lemma \ref{lem:often}.

\begin{lem}\label{lem:dual-often}
Let $u\colon X\rightarrow Y$ be a nonzero non-invertible morphism between indecomposable objects in $\mathcal{D}$. Then there is an exact triangle
$$Y[-1] \stackrel{\binom{*}{v}}\longrightarrow Z'\oplus Z \stackrel{(*, w)}\longrightarrow X\stackrel{u}\longrightarrow Y$$
such that $Z$ is indecomposable and that both morphisms $v$ and $w$ are nonzero non-invertible. \hfill $\square$
\end{lem}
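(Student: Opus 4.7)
The plan is to deduce Lemma \ref{lem:dual-often} from Lemma \ref{lem:often} purely by a rotation of triangles; no fresh construction is needed.

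First, I apply Lemma \ref{lem:often} to the given nonzero non-invertible morphism $u\colon X\to Y$. This produces an exact triangle
\[
X\stackrel{u}\longrightarrow Y\stackrel{\binom{a}{v'}}\longrightarrow Z'\oplus Z\stackrel{(b,w')}\longrightarrow X[1]
\]
with $Z$ indecomposable and $v'\colon Y\to Z$, $w'\colon Z\to X[1]$ both nonzero and non-invertible.

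Next, I rotate this triangle twice backwards using the axiom (TR2), turning it into the exact triangle
\[
Y[-1]\stackrel{-\binom{a}{v'}[-1]}\longrightarrow (Z'\oplus Z)[-1]\stackrel{-(b,w')[-1]}\longrightarrow X\stackrel{u}\longrightarrow Y.
\]
I then relabel $Z[-1]$ and $Z'[-1]$ as the new $Z$ and $Z'$ (the new $Z$ is indecomposable because $[-1]$ is an equivalence), and set $v=-v'[-1]\colon Y[-1]\to Z$ and $w=-w'[-1]\colon Z\to X$, which are the relevant components of the structure morphisms.

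Finally, I verify that $v$ and $w$ retain the required properties. Since $[-1]$ is a triangle equivalence, a morphism is zero (resp.\ invertible) if and only if its shift is, and sign change preserves both properties; hence $v$ and $w$ are nonzero and non-invertible, exactly because $v'$ and $w'$ are. This yields the triangle in the stated form.

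There is essentially no obstacle beyond the bookkeeping of rotations and signs; the only mild care is to make sure that after the two backward rotations the summand $Z$ (now shifted) still receives the original nonzero non-invertible component of the connecting morphism and still maps to $X$ by a nonzero non-invertible morphism, which is immediate from the rotation formulas.
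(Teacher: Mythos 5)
Your proof is correct, and it fills in exactly the argument the paper leaves implicit (the paper states Lemma~\ref{lem:dual-often} with no proof, marking it simply as the ``dual'' of Lemma~\ref{lem:often}). Rotating the triangle from Lemma~\ref{lem:often} backwards twice via (TR2), relabelling $Z[-1]$ as $Z$, and noting that the shift $[-1]$ together with a sign change preserves being nonzero and non-invertible, and preserves indecomposability, is precisely the intended observation.
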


\begin{lem}\label{lem:mor}
 Let $\mathcal{D}$ be a triangulated category which is a non-degenerate block.
Then for any indecomposable object $X$, there is a sequence
$X = X_0, X_1, X_2, X_3 = X[1]$ of indecomposable objects with ${\rm Hom}_\mathcal{D}(X_{i-1},X_i) \neq 0.$
\end{lem}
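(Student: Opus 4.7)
The plan is to use the non-degeneracy of $\mathcal{D}$ at the indecomposable $X$ to produce a nonzero non-invertible morphism incident to $X$, and then feed it into Lemma~\ref{lem:often} or its dual Lemma~\ref{lem:dual-often} to manufacture the length-$3$ path directly.

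First I unpack the hypothesis: since $\mathcal{D}$ is a non-degenerate block, the object $X$ fails the degeneracy condition, so there exists an indecomposable $Y$ together with a nonzero non-invertible morphism which is either of the form $u\colon X\to Y$ or of the form $u\colon Y\to X$. I then treat these two cases separately.

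In the outgoing case, applying Lemma~\ref{lem:often} to $u\colon X\to Y$ yields an exact triangle
\[
X\stackrel{u}\longrightarrow Y\stackrel{\binom{\ast}{v}}\longrightarrow Z'\oplus Z\stackrel{(\ast,w)}\longrightarrow X[1]
\]
with $Z$ indecomposable and both $v\colon Y\to Z$ and $w\colon Z\to X[1]$ nonzero; the sequence $X_0=X,\ X_1=Y,\ X_2=Z,\ X_3=X[1]$ linked by $u,v,w$ is then as required. In the incoming case, applying Lemma~\ref{lem:dual-often} to $u\colon Y\to X$ produces an exact triangle
\[
X[-1]\stackrel{\binom{\ast}{v}}\longrightarrow Z'\oplus Z\stackrel{(\ast,w)}\longrightarrow Y\stackrel{u}\longrightarrow X
\]
with $Z$ indecomposable and $v,w$ nonzero. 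Since the translation $[1]$ is an auto-equivalence, it preserves indecomposability and nonzero morphisms, so $Z[1]$ and $Y[1]$ remain indecomposable and the shifted morphisms $v[1]\colon X\to Z[1]$, $w[1]\colon Z[1]\to Y[1]$, $u[1]\colon Y[1]\to X[1]$ are all nonzero; the sequence $X_0=X,\ X_1=Z[1],\ X_2=Y[1],\ X_3=X[1]$ then meets the demand.

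The only real subtlety I anticipate is the correct reading of the non-degeneracy condition as a disjunction: its negation at $X$ asserts only the existence of a nonzero non-invertible morphism \emph{into or out of} $X$ between indecomposables, and it is precisely this asymmetry that forces the separate appeal to the dual lemma in the incoming case. Once that disjunction is in hand, the construction is mechanical and I expect no further obstacle.
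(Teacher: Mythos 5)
Your proof is correct and is essentially identical to the paper's argument: unpack non-degeneracy at $X$ to produce a nonzero non-invertible morphism $u\colon X\to Y$ or $u\colon Y\to X$ with $Y$ indecomposable, then in the outgoing case apply Lemma~\ref{lem:often} directly, and in the incoming case apply Lemma~\ref{lem:dual-often} to obtain a sequence from $X[-1]$ to $X$ and shift by $[1]$. You only spell out the shift step slightly more explicitly than the paper does.
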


\begin{proof}
By the non-degeneration of $\mathcal{D}$, we  assume that there exists
a nonzero and non-invertible morphism $u\colon X\rightarrow Y$ or $u\colon Y\rightarrow X$ with $Y$ indecomposable. In the first case, we apply Lemma \ref{lem:often} and then the morphisms $u, v, w$ yield the required sequence. In the second case, we apply Lemma \ref{lem:dual-often} to obtain a sequence from $X[-1]$ to $X$. Applying $[1]$ to this sequence, we are done.
\end{proof}

\begin{rem}\label{rem:1}
The following immediate consequence of Lemma \ref{lem:mor} is of interest. In a non-degenerate block $\mathcal{D}$, any path from $X$ to $Y$ can be refined to a path $X=X_0, X_1, \cdots, X_t=Y$ such that ${\rm Hom}_\mathcal{D}(X_{i-1},X_i) \neq 0$  for $1\leq i\leq t$.
\end{rem}

\begin{lem}\label{lem:inverse}
Let $\mathcal{D}$ be a triangulated category which is a block. Assume that there is a path from $X$ to $Y$. Then there is a path from $Y$ to $X[n]$ for some $n\geq 0$.
\end{lem}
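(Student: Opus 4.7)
The plan is to induct on the length $m$ of the given path $X=X_0,X_1,\ldots,X_m=Y$, peeling off the final step. The base case $m=0$ is trivial: $Y=X$, so we take $n=0$ and the one-term sequence $Y$ is itself a path to $X[0]$.

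For the inductive step, the key observation is that the translation functor $[1]$ preserves paths (it is an equivalence, so it preserves non-vanishing of Hom groups, and it trivially preserves conditions of the form $X_i=X_{i-1}[1]$). Thus, by the induction hypothesis applied to the truncated path $X=X_0,\ldots,X_{m-1}$, we get a path from $X_{m-1}$ to $X[n']$ for some $n'\ge 0$, and applying $[1]$ we obtain a path from $X_{m-1}[1]$ to $X[n'+1]$. It therefore suffices, in each case, to produce a short path from $Y$ to either $X_{m-1}$ or $X_{m-1}[1]$.

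First, if $Y=X_{m-1}[1]$, the $[1]$-translated path is already what we want, so $n=n'+1$ works. Otherwise, we have a nonzero morphism $u\colon X_{m-1}\to Y$. If $u$ is invertible, then $u^{-1}\colon Y\to X_{m-1}$ is a nonzero morphism, giving a length-one path $Y,X_{m-1}$ which concatenates with the inductive path to yield a path from $Y$ to $X[n']$. If $u$ is nonzero and non-invertible, we apply Lemma \ref{lem:often} to $u$: there is an exact triangle $X_{m-1}\stackrel{u}\to Y\to Z'\oplus Z\to X_{m-1}[1]$ with $Z$ indecomposable, together with nonzero morphisms $v\colon Y\to Z$ and $w\colon Z\to X_{m-1}[1]$. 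Thus $Y,Z,X_{m-1}[1]$ is a path of length two from $Y$ to $X_{m-1}[1]$, which concatenates with the $[1]$-translated inductive path to give a path from $Y$ to $X[n'+1]$.

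The only conceptual subtlety is the non-invertible subcase, where we need to turn the forward arrow $u\colon X_{m-1}\to Y$ into a backward chain from $Y$; this is precisely what Lemma \ref{lem:often} accomplishes, by filling in an indecomposable summand of the cone and using the vanishing of the composites $v\circ u$ and $u[1]\circ w$ to rule out $v=0$ or $w=0$. Everything else is bookkeeping, and in particular the resulting $n$ is always $n'$ or $n'+1$, hence nonnegative.
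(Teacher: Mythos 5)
Your proof is correct, and it uses the same key lemma (Lemma~\ref{lem:often}) as the paper's proof. The core idea in both is to turn a forward arrow $X_{m-1}\to Y$ into a backward path $Y\to Z\to X_{m-1}[1]$ via the cone construction. The difference is organizational but worth noting. The paper first disposes of the degenerate case via Lemma~\ref{lem:dege}(1), then, in the non-degenerate case, invokes Remark~\ref{rem:1} (which rests on Lemma~\ref{lem:mor} and hence on non-degeneracy) to refine the given path so that every step is realized by a nonzero morphism, and finally reverses each step and glues. Your induction instead peels off the last step and treats the shift step $X_m=X_{m-1}[1]$ and the invertible-morphism step as explicit base cases within the induction, so no refinement is needed and the degenerate/non-degenerate dichotomy never enters. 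This buys a small but real simplification: the argument is uniform, does not invoke Lemma~\ref{lem:dege} or Remark~\ref{rem:1}, and in fact does not use the block hypothesis at all---it establishes the implication for any triangulated category in which objects decompose into finite direct sums of indecomposables, given that the hypothesized path exists.
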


\begin{proof}
If $\mathcal{D}$ is degenerate, the statement is immediate by Lemma \ref{lem:dege}(1).  We assume that $\mathcal{D}$ is non-degenerate.

We first prove that if there is a nonzero morphism $u\colon X\rightarrow Y$, then there is a path from $Y$ to $X[1]$. Indeed, if $u$ is invertible, there is nothing to prove. Otherwise, we use the morphisms $u, v, w$ in Lemma \ref{lem:often} to obtain the required path.

In general, by Remark \ref{rem:1} we assume that there is a path $X=X_0, X_1, \cdots, X_t=Y$ with ${\rm Hom}_\mathcal{D}(X_{i-1},X_i) \neq 0$  for $1\leq i\leq t$. By the above argument, we have paths from $X_i$ to $X_{i-1}[1]$. By applying the translation functors and gluing the paths, we obtain a path from $Y$ to $X[t]$.
\end{proof}

The following result claims the existence of certain paths in a block.

\begin{prop}\label{prop:path}
Let $\mathcal{D}$ be a triangulated category which is a block. Let $X, Y$ be indecomposable objects in $\mathcal{D}$. Then there exists a path from $X$ to $Y[n]$ for some $n\geq 0$.
\end{prop}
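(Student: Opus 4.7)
The plan is to fix the indecomposable $X$ and consider the subclass
\[
  \mathcal{U}_X = \{Z \in {\rm Ind}\; \mathcal{D} \mid \text{there is a path from } X \text{ to } Z[n] \text{ for some } n \geq 0\}
\]
of ${\rm Ind}\; \mathcal{D}$. The trivial length-zero path from $X$ to $X = X[0]$ shows $X \in \mathcal{U}_X$, so the class is nonempty. If I can prove it is path-closed in the sense of Section~4, then Proposition \ref{prop:block}(2) forces $\mathcal{U}_X = {\rm Ind}\; \mathcal{D}$, so the given indecomposable $Y$ lies in $\mathcal{U}_X$ and the proposition follows.

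To verify path-closedness I use the equivalent reformulation recorded just after the definition. Closure under $[-1]$ is automatic, since the same path $X \to Z[n]$ also realizes $Z[-1] \in \mathcal{U}_X$ via $Z[n] = Z[-1][n+1]$ with $n+1 \geq 0$. Closure under $[1]$ follows by appending a single step $Z[n], Z[n+1]$ (an allowed edge because $X_i = X_{i-1}[1]$ is permitted) to obtain $X \to Z[1][n]$ with $n \geq 0$. For the morphism condition, take $Z \in \mathcal{U}_X$ realized by a path $\pi \colon X \to Z[n]$, together with an indecomposable $W$ admitting a nonzero hom in either direction. If ${\rm Hom}_\mathcal{D}(Z, W) \neq 0$, the length-one path $Z, W$ translates by $[n]$ to a path $Z[n], W[n]$ which concatenated with $\pi$ gives $X \to W[n]$. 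If instead ${\rm Hom}_\mathcal{D}(W, Z) \neq 0$, I apply Lemma \ref{lem:inverse} to the length-one path $W, Z$ to obtain a path $Z \to W[k]$ with $k \geq 0$; shifting by $[n]$ and concatenating with $\pi$ yields $X \to W[n+k]$. The reverse direction of the morphism condition (starting from $W \in \mathcal{U}_X$ and producing a path to a non-negative shift of $Z$) is handled by the same two constructions with the roles of $Z$ and $W$ exchanged.

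The main obstacle is purely conceptual: the defining condition of $\mathcal{U}_X$ is asymmetric in that the shift on the target must be non-negative, so a backwards arrow ${\rm Hom}_\mathcal{D}(W, Z) \neq 0$ must somehow be converted into a forward path ending at a non-negative shift of $W$. This is precisely what Lemma \ref{lem:inverse} provides, and it is the only nontrivial ingredient in the proof; the remaining manipulations -- extending paths by $[1]$-steps, translating by shift functors, and concatenating -- are entirely routine bookkeeping.
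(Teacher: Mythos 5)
Your proof is correct, and while it rests on the same key ingredient --- Lemma \ref{lem:inverse}, which converts a backwards arrow into a forward path to a non-negative shift --- you organize the argument differently from the paper. The paper invokes Proposition \ref{prop:block}(3) to obtain a zigzag sequence $X = X_0, X_1, \dots, X_t = Y$ in which each step is a path or an inverse path, then applies Lemma \ref{lem:inverse} at each backward step to replace the inverse path with a forward path to a shift, simultaneously shifting the entire tail of the sequence so that the pieces still concatenate; the desired path is built explicitly. You instead introduce the reachability class $\mathcal{U}_X = \{Z \mid X \text{ has a path to } Z[n] \text{ for some } n \geq 0\}$, verify it is nonempty and path-closed (using Lemma \ref{lem:inverse} exactly where a backward arrow arises), and then invoke Proposition \ref{prop:block}(2) to conclude $\mathcal{U}_X = {\rm Ind}\; \mathcal{D}$. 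Your version is slightly more abstract and avoids the bookkeeping of re-shifting the tail of a zigzag at every correction; the paper's version is more constructive and makes the shape of the resulting path visible. One small point: in your verification of path-closedness, the ``reverse direction'' you mention at the end is already subsumed in the stated reformulation (closure under $[\pm 1]$ plus the one-directional morphism condition starting from an object already in $\mathcal{U}$), so that extra sentence is harmless but not needed.
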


\begin{proof}
By the path-connectedness, there is a sequence $X=X_0, X_1, \cdots, X_t=Y$ such that for $1\leq i\leq t$ there is a path from $X_{i-1}$ to $X_i$, or a path from $X_i$ to $X_{i-1}$. In the latter case, applying Lemma \ref{lem:inverse},  we have a path from $X_{i-1}$ to $X_i[m]$ for some $m\geq 0$. We now adjust the given sequence as $$X_0, \cdots, X_{i-1}, X_i[m], X_{i+1}[m], \cdots, X_t[m].$$ Repeating this procedure, we obtain the required path.
\end{proof}

Recall that we do not assume that the triangulated category $\mathcal{D}$ has split idempotents. However, the following observation implies that nontrivial idempotents on indecomposable objects  lead to some unexpected paths; compare Remark \ref{rem:B}(2).

\begin{lem}\label{lem:non-idem}
Let $X$ be an indecomposable object in $\mathcal{D}$ with a nontrivial idempotent $e\colon X\rightarrow X$. Then there is a path of length two from $X[1]$ to $X$.
\end{lem}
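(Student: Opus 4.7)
The plan is to complete $e$ to an exact triangle and extract, from the idempotent relations, morphisms that exhibit a suitable indecomposable $Y$ with nonzero arrows $X[1]\to Y$ and $Y\to X$. Form the triangle
\[
X \xrightarrow{e} X \xrightarrow{f} C \xrightarrow{g} X[1].
\]
First I would check that both $f\neq 0$ and $g\neq 0$. If $f=0$, the long exact sequence for $\mathrm{Hom}_{\mathcal D}(X,-)$ applied to the triangle shows that $e$ admits a right inverse; combined with $e^{2}=e$ this forces $e=\mathrm{id}_{X}$, contradicting nontriviality. Dually, if $g=0$ then $f$ is a split monomorphism, and since $f\circ e = 0$ this forces $e=0$.

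Next, from $(1-e)e = 0$ and $e(1-e) = 0$ together with the appropriate long exact Hom sequences, obtain factorizations
\[
1-e \;=\; \phi \circ f \qquad\text{and}\qquad 1-e \;=\; -g[-1]\circ h
\]
for some $\phi\colon C\to X$ and $h\colon X\to C[-1]$. Setting $\psi := h[1]\colon X[1]\to C$, both $\phi$ and $\psi$ are nonzero because $1-e\neq 0$. Writing $C = \bigoplus_{i=1}^{m} Y_{i}$ as a finite direct sum of indecomposables (using the standing assumption in Section~4), it suffices to produce an index $j_{0}$ with $\psi_{j_{0}}\neq 0$ and $\phi_{j_{0}}\neq 0$; then $Y := Y_{j_{0}}$ yields $\mathrm{Hom}_{\mathcal D}(X[1],Y)\neq 0$ via $\psi_{j_{0}}$ and $\mathrm{Hom}_{\mathcal D}(Y,X)\neq 0$ via $\phi_{j_{0}}$, so that $X[1]\to Y\to X$ is the desired length-two path.

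When $C$ is itself indecomposable the choice $Y=C$ works immediately. In the decomposable case I would argue by contradiction: assume no $Y_{i}$ satisfies both $\mathrm{Hom}_{\mathcal D}(X[1],Y_{i})\neq 0$ and $\mathrm{Hom}_{\mathcal D}(Y_{i},X)\neq 0$, and partition the indices into disjoint sets $A=\{i:\mathrm{Hom}_{\mathcal D}(X[1],Y_{i})\neq 0\}$ and $B=\{i:\mathrm{Hom}_{\mathcal D}(Y_{i},X)\neq 0\}$, with corresponding decomposition $C = C_{A}\oplus C_{B}\oplus C_{R}$. Under this assumption, every morphism $X[1]\to C$ (hence $\psi$) lands in $C_{A}$ and every morphism $C\to X$ (hence $\phi$) factors through $C_{B}$. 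Decomposing $f=(f_{A},f_{B},f_{R})^{T}$ and $g=(g_{A},g_{B},g_{R})$ accordingly and iteratively applying Lemma~\ref{lem:basic}(1) (to the triangle and its rotations) to the zero components of $f$ and $g$ forces direct summands of $C_{A}$, $C_{B}$ or $C_{R}$ to be isomorphic to $X$ or $X[1]$. Combined with the identities $\phi_{B}f_{B}=1-e$ and $-g_{A}[-1]h_{A}=1-e$ and the indecomposability of $X$, the resulting splittings are incompatible with $e$ being a nontrivial idempotent, yielding the contradiction.

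The main obstacle is the final step in the decomposable case: the cleanest way to see the contradiction is via the idempotent completion of $\mathcal{D}$, where $C\cong X_{1-e}\oplus X_{1-e}[1]$ and the indecomposability of $X$ prevents either Karoubi-summand from being representable in $\mathcal{D}$; translating this picture into a purely structural argument using only Lemma~\ref{lem:basic} and the factorizations of $1-e$ -- without explicit recourse to the Karoubi envelope -- is the delicate part of the proof.
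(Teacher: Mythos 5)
There is a genuine gap at exactly the point you flag as delicate, and it is created by choosing weaker morphisms than the argument needs. You construct $\phi\colon C\to X$ and $\psi\colon X[1]\to C$ \emph{independently}, as two unrelated factorizations of $1-e$ coming from the long exact Hom-sequences: they satisfy $\phi\circ f=1-e$ and (up to sign) $g\circ\psi=(1-e)[1]$, but nothing ties them together. The paper's proof instead produces a single compatible pair $b\colon C\to X$, $a\colon X[1]\to C$ satisfying the key identity
\[
\mathrm{Id}_C \;=\; f\circ b \;+\; a\circ g ,
\]
which encodes that, in the Karoubi envelope, $C\cong X_{1-e}\oplus X_{1-e}[1]$ with $b$ and $a$ the corresponding retraction and section. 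This identity -- not the two factorizations -- is what makes the partition argument close. Once one assumes the supports of $a$ and $b$ on the indecomposable summands of $C$ are disjoint and writes $a=(a',0)^{t}$, $b=(0,b'')$ with $C=C'\oplus C''$, reading the $(C',C')$ and $(C'',C'')$ entries of the displayed identity gives $a'\circ g'=\mathrm{Id}_{C'}$ and $f''\circ b''=\mathrm{Id}_{C''}$. Hence $g'$ is split mono into the indecomposable object $X[1]$: if $C'\neq 0$ then $g'$ is an isomorphism, $g$ is split epi, and $e=0$; if $C'=0$ then $f=f''$ is split epi, so $e$ is split mono, and with $e^2=e$ this forces $e=\mathrm{Id}_X$. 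Your conditions $\phi_B f_B=1-e$ and $g_A\psi_A=-(1-e)[1]$ give no such splitness, because $1-e$ is an idempotent but not the identity, so $f_B$ need not be split mono nor $g_A$ split epi; and the proposed appeal to Lemma~\ref{lem:basic}(1) ``on the zero components of $f$ and $g$'' cannot start, since nothing in your setup forces any component of $f$ or $g$ to vanish. To repair the proof, replace the two independent factorizations by a compatible pair $(a,b)$ with $\mathrm{Id}_C=f b+a g$ (easily produced in the idempotent completion and transported back), and then run the partition argument as above.
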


\begin{proof}
We form the exact triangle $X\stackrel{e}\rightarrow X \stackrel{u}\rightarrow C \stackrel{v}\rightarrow X[1]$.  There exist morphisms $a\colon X[1]\rightarrow C$ and $b\colon C\rightarrow X$ such that ${\rm Id}_C=u\circ b+a\circ v$ (this can be proved in the idempotent completion  \cite{BS} of $\mathcal{D}$, where $e$ equals ${\rm Id}_A\oplus 0\colon A\oplus B\rightarrow A\oplus B$ for some objects $A$ and $B$ in the idempotent completion.)

Write $C = \bigoplus_{i=1}^s C_i$ as a direct sum of
indecomposable objects $C_i$ in $\mathcal{D}$, thus $a = (a_1,\dots,a_s)^t$
with $a_i\colon X[1] \rightarrow C_i$ and $b = (b_1,\dots,b_s)$ with
$b_i\colon C_i \rightarrow  X$. We can assume that $a_i \neq 0$
if and only if $1\leq i \le r$. If there is $i$ with
$1\leq i \leq r$ such that also $b_i\neq 0$, then
we obtain a path  $X[1] \to C_i \to X$ of length two, as we want to show.

Assume now that no such path exists. Let $C' = \bigoplus_{i=1}^r C_i$
and $C'' = \bigoplus_{i=r+1}^t C_i$, let $a = (a',0)^t$
with $a'\colon X[1] \rightarrow C'$ and $b = (0,b'')$ with $b''\colon C'' \rightarrow X.$
Also, write $u = (u',u'')^t$ and $v = (v',v'')$. Then we have
$a'\circ v' = {\rm Id}_{C'}$ and $u''\circ b'' = {\rm Id}_{C''}.$
First, assume that $C' \neq 0.$ The equality
$a'\circ v' = {\rm Id}_{C'}$ shows that $v'$ is split mono. Since
$X[1]$ is indecomposable, it follows that $v'$ is an isomorphism
and therefore $v$ is split epic. This implies that $e = 0$,
a contradiction. Thus, we can assume that $C' = 0$. But this
implies that $u=u'' $ is split epic,  and therefore $e$ is split mono, thus $e = {\rm Id}_X$,
again a contradiction.
\end{proof}

\section{Hereditary triangulated categories which are blocks}

In this section, using the non-existence of certain paths, we characterize hereditary triangulated categories which are blocks.

Throughout, $\mathcal{D}$ is a triangulated category, in which each object is a finite direct sum of indecomposable objects.

 For an indecomposable object $X$, denote by $[X \to]$ the class
of all indecomposable objects $U$ in $\mathcal{D}$ with a path from $X$ to $U$. Then  $[X\to]$ is closed under the translation functor $[1]$. The complement of $[X\to]$ in ${\rm Ind}\; \mathcal{D}$ is closed under $[-1]$.

\begin{thm}\label{thm:B}
Let $\mathcal{D}$  be a triangulated category which is a block.
Then the following conditions are equivalent:
\begin{enumerate}
\item The triangulated category $\mathcal{D}$ is hereditary.
\item If $X$ is indecomposable in $\mathcal{D}$, then there is no path from $X[1]$ to $X$.
\item There is an indecomposable object $X$ in $\mathcal{D}$ with no path from $X[1]$ to $X$.
\item There are indecomposable objects $X, Y$ in $\mathcal{D}$ with no path from $Y$ to $X$.
\end{enumerate}
\end{thm}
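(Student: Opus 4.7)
The plan is to establish the cycle $(1)\Rightarrow(2)\Rightarrow(3)\Rightarrow(4)\Rightarrow(1)$. For $(1)\Rightarrow(2)$, I apply Theorem \ref{thm:A} to realize $\mathcal{D}={\rm add}\;(\bigcup_{n\in\mathbb{Z}}\mathcal{A}[n])$ with $\mathcal{A}$ hereditary abelian. Every indecomposable lies in a unique $\mathcal{A}[p]$; the hereditary condition together with ${\rm Hom}_\mathcal{D}(\mathcal{A},\mathcal{A}[m])=0$ for $m<0$ gives ${\rm Hom}_\mathcal{D}(A[p],B[q])=0$ whenever $q-p\notin\{0,1\}$. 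Since a shift step $X_i=X_{i-1}[1]$ also increases $p$ by one, the integer $p$ is non-decreasing along any path, ruling out a path from $X[1]$ to $X$. The implication $(2)\Rightarrow(3)$ is immediate since the block $\mathcal{D}$ contains an indecomposable, and $(3)\Rightarrow(4)$ follows by setting $Y=X[1]$.

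For the main implication $(4)\Rightarrow(1)$, given indecomposables $X,Y$ with no path from $Y$ to $X$, the plan is to construct a full additive subcategory $\mathcal{A}$ meeting condition (2) of Theorem \ref{thm:A}. For each indecomposable $Z$, set
\[
S(Z)=\{m\in\mathbb{Z}\mid \text{there is a path from }Y\text{ to }Z[m]\}.
\]
Because a single shift step $Z[m-1][1]=Z[m]$ is itself allowed in a path, $S(Z)$ is upward closed, and by Proposition \ref{prop:path} it is nonempty. I claim $S(Z)$ is bounded below. Otherwise $S(Z)=\mathbb{Z}$ by upward closure, so $-\ell\in S(Z)$ for $\ell\geq 0$ chosen via Proposition \ref{prop:path} so that a path from $Z$ to $X[\ell]$ exists; shifting that path by $-\ell$ gives a path from $Z[-\ell]$ to $X$, and concatenating with the path from $Y$ to $Z[-\ell]$ produces a path from $Y$ to $X$, contradicting (4). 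Therefore $n_0(Z):=\min S(Z)$ is a well-defined integer. Let $\mathcal{A}$ be the full additive subcategory of $\mathcal{D}$ generated by the indecomposables $Z$ with $n_0(Z)=0$.

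To verify condition (2) of Theorem \ref{thm:A}, first note that a direct reindexing gives $n_0(Z[n_0(Z)])=0$, so $Z[n_0(Z)]\in\mathcal{A}$ and $Z\in\mathcal{A}[-n_0(Z)]$; hence $\mathcal{D}={\rm add}\;(\bigcup_{n\in\mathbb{Z}}\mathcal{A}[n])$ since every object of $\mathcal{D}$ is a finite direct sum of indecomposables. Second, if $U,V\in\mathcal{A}$ are indecomposable and $f\colon U\to V[m]$ is nonzero with $m<0$, the length-one path from $U$ to $V[m]$ combined with a path from $Y$ to $U$ (which exists since $n_0(U)=0$) yields a path from $Y$ to $V[m]$, forcing $n_0(V)\leq m<0$ and contradicting $n_0(V)=0$. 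Thus ${\rm Hom}_\mathcal{D}(\mathcal{A},\mathcal{A}[m])=0$ for $m<0$, and Theorem \ref{thm:A} delivers (1).

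The conceptual heart of the argument is the choice of $\mathcal{A}$ through the arrival function $n_0$; once it is in place, both verifications are straightforward manipulations of the path preorder. The single place where hypothesis (4) is essential is the finiteness of $n_0$, which rules out the cyclic shift behaviour of the degenerate blocks described in Lemma \ref{lem:dege}(3)—precisely the situation in which (4) is also violated.
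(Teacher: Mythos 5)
Your proof is correct, and the overall strategy coincides with the paper's: prove the cycle of implications and, for the hard direction, construct a full additive subcategory $\mathcal{A}$ verifying condition (2) of Theorem~\ref{thm:A}. The other implications match the paper's arguments essentially verbatim (the paper also uses the ``level is non\-decreasing'' observation, phrased as $[X\to]\subseteq\bigcup_{i\ge n}\mathcal{H}[i]$, to get $(1)\Rightarrow(2)$, and deduces $(4)\Rightarrow(3)$ from Proposition~\ref{prop:path}).

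The genuine difference is in the hard implication. The paper proves $(3)\Rightarrow(1)$: starting from an $X$ with no path from $X[1]$ to $X$, it sets $\mathcal{U}=[X\to]$, $\mathcal{V}$ its complement, and $\mathcal{A}={\rm add}\,(\mathcal{U}\cap\mathcal{V}[1])$; the verification that $\mathcal{D}={\rm add}(\bigcup_n\mathcal{A}[n])$ goes via an auxiliary Step~1 (a nonzero map out of $\mathcal{A}$ lands in $\mathcal{A}\cup\mathcal{A}[1]$), which relies on a triangle argument through Lemma~\ref{lem:dual-often}, followed by an induction on path length. You instead prove $(4)\Rightarrow(1)$ directly: you package the condition as the arrival function $n_0(Z)=\min\{m : \text{path from }Y\text{ to }Z[m]\}$, whose well-definedness (boundedness below) is exactly what hypothesis (4) buys, via one application of Proposition~\ref{prop:path} and a concatenation. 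The resulting $\mathcal{A}=\{Z: n_0(Z)=0\}$ is, after unwinding, the same kind of set $\mathcal{U}'\cap\mathcal{V}'[1]$ with $\mathcal{U}'=[Y\to]$, but the generation statement ($Z\in\mathcal{A}[-n_0(Z)]$) is immediate once $n_0$ exists, so you sidestep Step~1 and the induction entirely. This is a real streamlining: you trade the paper's cone-analysis (Lemma~\ref{lem:dual-often}) in Step~1 for a single global boundedness argument, using Proposition~\ref{prop:path} as a cleaner black box. The paper's route does deliver a bit more along the way (Step~1 is a structural statement of independent interest), but your approach gets to the goal faster and makes the role of the hypothesis more transparent.
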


\begin{proof}
The implication ``$(1)\Rightarrow (2)$"  is obvious. By Theorem \ref{thm:A}(3), we identify  $\mathcal{D}$ with $\mathbf{D}^b(\mathcal{H})$ for a hereditary abelian category $\mathcal{H}$. Since $X$ is indecomposable, it is of the form $X = A[n]$ for some indecomposable object $A\in \mathcal{H}$ and $n\in \mathbb{Z}$. By induction on the length of paths, we observe that $[X\to] \subseteq \bigcup_{i\ge n} \mathcal{H}[i]$. In particular, we have that $X[-1]$ does not belong to $[X\to]$.

The implications ``$(2)\Rightarrow (3)$" and ``$(3)\Rightarrow (4)$" are trivial. For ``$(4)\Rightarrow (3)$", we consider the given indecomposable objects $X, Y$. By Proposition \ref{prop:path}, there is a path from $Y$ to $X[n]$ for some $n\geq 0$. By assumption, we infer that $n\geq 1$. If there is a path from $X[1]$ to $X$, we obtain a path from $X[n]$ to $X$. This yields a path from $Y$ to $X$, a contradiction.

It remains to show ``$(3)\Rightarrow (1)$". Write $\mathcal{U}=[X\to]$ and $\mathcal{V}={{\rm Ind}\; \mathcal{D}}\backslash \mathcal{V}$ its complement. Let
$$\mathcal{A}={\rm add}\; (\mathcal{U}\cap \mathcal{V}[1]).$$
We will prove that $\mathcal{A}$ satisfies the conditions in Theorem \ref{thm:A}(2). Then we are done.

\emph{Step 1}\; For a nonzero morphism $u\colon A\rightarrow B$ between indecomposable objects with $A\in \mathcal{A}$, we observe that $B\in \mathcal{U}$. We claim that $B\notin \mathcal{U}[2]$.

From the claim, we infer that $B$ lies in $\mathcal{A}$ or $\mathcal{A}$[1]. Indeed,  if $B\in \mathcal{V}[1]$, we have $B\in \mathcal{A}$. Otherwise, we have $B\in \mathcal{U}[1]$ and by the claim, $B\in \mathcal{V}[2]$. Hence, we have $B\in \mathcal{A}[1]$.

To prove the claim, we assume on the contrary that $B\in \mathcal{U}[2]$. Then we have a path from $X[1]$ to $B[-1]$. By the facts that $A\notin \mathcal{U}[1]$ and $\mathcal{U}[2]\subseteq \mathcal{U}[1]$, we infer that $u$ is not an isomorphism. We obtain by Lemma \ref{lem:dual-often} a path of length two from $B[-1]$ to $A$. Then we have a path from $X[1]$ to $A$, that is, $A\in \mathcal{U}[1]$. A contradiction!

\emph{Step 2}\; To show $\mathcal{D}={\rm add}\; (\bigcup_{n\in \mathbb{Z}} \mathcal{A}[n])$, we claim that each indecomposable object $Y\in \mathcal{D}$ is of the form $B[m]$ for some $B\in \mathcal{A}$ and $m\in \mathbb{Z}$.

We observe by assumption that $X\in \mathcal{A}$. Assume first that $Y\in \mathcal{U}=[X\to]$. Then there is a path $X=X_0, X_1, \cdots, X_t=Y$. By induction on the length of paths, we may assume that $X_{t-1}=A[n]$ for $A\in \mathcal{A}$ and some $n\in \mathbb{Z}$. If $Y=X_{t-1}[1]$, then we are done by $Y=A[n+1]$. If ${\rm Hom}_\mathcal{D}(X_{t-1}, Y)\neq 0$, equivalently ${\rm Hom}_\mathcal{D}(A, Y[-n])\neq 0$, we infer from \emph{Step 1} that $Y[-n]$ lies in $\mathcal{A}$ or $\mathcal{A}[1]$. This also proves the statement in this case.

For the general case, by Proposition \ref{prop:path} there is a path from $X$ to $Y[d]$ for some $d\geq 0$, that is, $Y[d]\in [X \to]$. Applying the above argument to $Y[d]$, we are done for the claim.

\emph{Step 3}\; We claim that ${\rm Hom}_\mathcal{D}(\mathcal{A}, \mathcal{A}[m])=0$ for $m<0$. We assume the contrary. Take two indecomposable objects $A, B\in \mathcal{A}$ with ${\rm Hom}_\mathcal{D}(A, B[m])\neq 0$. Then $B[m]$ lies in $\mathcal{U}$. On the other hand, $B[m]$ lies in $\mathcal{V}[m+1]$. Since $m+1\leq 0$, we have $\mathcal{V}[m+1]\subseteq \mathcal{V}$. We conclude that $B[m]\in \mathcal{U}\cap \mathcal{V}$, a contradiction! This completes the whole proof.
\end{proof}

\begin{rem}\label{rem:B}
(1) The equivalence of the conditions in Theorem \ref{thm:B}(2) and (3) is somehow surprising: the existence of a single indecomposable object with a special property forces all the indecomposable objects to have this property! This indicates a rather unusual character of homogeneity.

(2) It is well known that the bounded derived category of an abelian category has split idempotents. In particular, a hereditary triangulated category has split idempotents. Then Theorem \ref{thm:B}(3) allows us to  strengthen Lemma \ref{lem:non-idem}. Assume that the block $\mathcal{D}$  does not have split idempotents. Then there are paths from $X[1]$ to $X$ for any idempotent object $X$ in $\mathcal{D}$.
\end{rem}

In the remaining part, we draw some immediate consequences of Theorem \ref{thm:B}.

Let us call a path $X_0, X_1, \dots , X_n$ \emph{proper} provided that
for $1 \leq i \leq n$, there exists a nonzero and non-invertible map
$X_{i-1} \to X_i$ or else $X_i = X_{i-1}[1]$. An indecomposable object $X$ in a triangulated category $\mathcal{D}$ will called \emph{directing} provided there is no proper path of length at least one starting and ending in $X$.

The corresponding notion of \emph{directing objects} in an abelian category is well known and has been found useful in \cite{Rin}, where the \emph{paths}  are of the form $X_0, X_1, \dots , X_n$ such that
for $1 \leq i \leq n$, there exists a nonzero and non-invertible map $X_{i-1} \to X_i$. The following observation is immediate: for a hereditary abelian category $\mathcal{H}$, an indecomposable object $X$ in $\mathcal{H}$ is directing if and only if $X$ is directing in $\mathbf{D}^b(\mathcal{H})$.

Hereditary abelian categories with directing objects are studied in \cite{HR}. The following result characterizes their bounded derived categories.

\begin{prop}\label{prop:chara}
Let $\mathcal{D}$ be an algebraical triangulated category which is a block. Then the following two statements are equivalent:
\begin{enumerate}
\item The triangulated category $\mathcal{D}$ has a directing object.
\item There is a triangle equivalence $\mathbf{D}^b(\mathcal{H})\rightarrow \mathcal{D}$, where $\mathcal{H}$ is a hereditary abelian category with a directing object.
\end{enumerate}
\end{prop}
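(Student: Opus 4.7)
The implication $(2)\Rightarrow (1)$ is the easy direction. Given a triangle equivalence $G\colon \mathbf{D}^b(\mathcal{H})\to \mathcal{D}$ and a directing object $X\in \mathcal{H}$, the observation preceding the proposition guarantees that $X$ is already directing in $\mathbf{D}^b(\mathcal{H})$. Since $G$ is a triangle equivalence, it carries proper paths to proper paths and commutes with $[1]$, so $G(X)$ is directing in $\mathcal{D}$.

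For $(1)\Rightarrow (2)$ my plan is to funnel the hypothesis into Theorem \ref{thm:B} and then invoke Theorem \ref{thm:A.1}. Let $X\in \mathcal{D}$ be a directing object. The key intermediate claim is that there is no path from $X[1]$ to $X$, so that the hypothesis of Theorem \ref{thm:B}(3) is met. Suppose, for contradiction, that we have such a path $X[1]=Y_0,Y_1,\ldots,Y_t=X$. Prepending the shift step, I obtain the sequence $X,X[1]=Y_0,Y_1,\ldots,Y_t=X$, which is a path of length $t+1\ge 1$ beginning and ending at $X$. I then refine this to a proper path: the only non-proper steps are those for which $X_{i-1}\to X_i$ is a nonzero invertible morphism with $X_i\neq X_{i-1}[1]$, and in that case $X_{i-1}\simeq X_i$ forces $X_{i-1}=X_i$ in ${\rm Ind}\;\mathcal{D}$, so such a step can be deleted without destroying the path property at the neighbouring step. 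The initial step $X\to X[1]$ is automatically proper (it is a shift step), hence survives the refinement, and the resulting proper path from $X$ to $X$ has length at least one, contradicting the directing property of $X$.

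With the claim in hand, Theorem \ref{thm:B}, $(3)\Rightarrow (1)$, yields that $\mathcal{D}$ is hereditary. Combined with the algebraical hypothesis, Theorem \ref{thm:A.1} produces a triangle equivalence $G\colon \mathbf{D}^b(\mathcal{H})\to \mathcal{D}$ for some hereditary abelian category $\mathcal{H}$. To finish, I need a directing object in $\mathcal{H}$. The preimage $G^{-1}(X)$ is directing in $\mathbf{D}^b(\mathcal{H})$ (the argument is dual to the one used for $(2)\Rightarrow (1)$). By Lemma \ref{lem:split} every indecomposable of $\mathbf{D}^b(\mathcal{H})$ has the form $A[n]$ for some indecomposable $A\in\mathcal{H}$, and since $[n]$ is a triangle auto-equivalence it preserves directing-ness, so $A$ itself is directing in $\mathbf{D}^b(\mathcal{H})$. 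Applying the observation preceding the proposition once more, $A$ is directing in $\mathcal{H}$, as required.

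The main obstacle is the combinatorial lemma ``directing at $X$ $\Rightarrow$ no path from $X[1]$ to $X$''. It looks innocuous, but requires carefully verifying that the inserted shift step is proper (hence not collapsed) and that collapsing invertible steps cannot destroy the endpoints or the path property at the junction. Once this is dispatched, the rest of the argument is purely formal and is a clean reduction to the earlier theorems.
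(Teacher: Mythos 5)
Your proof is correct and follows essentially the same route as the paper: compose the hypothetical path from $X[1]$ to $X$ with the shift step $X,X[1]$, refine to a proper cycle to contradict directing-ness, then invoke Theorems \ref{thm:B} and \ref{thm:A.1}. You supply more detail than the paper does, in particular in carefully handling the deletion of non-proper steps and in explicitly transferring the directing object through the equivalence (via the decomposition of Lemma \ref{lem:split}) back to $\mathcal{H}$, a step the paper leaves implicit.
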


\begin{proof}
The implication ``$(2)\Rightarrow (1)$" is already indicated by the above discussion. For the converse, let $X$ be a directing object in $\mathcal{D}$. Any path
from $X[1]$ to $X$ could be composed with the path $X, X[1]$. After deleting some repetitions, we obtain a proper path from $X$ to $X$ of length at least one. Thus, no path from $X[1]$ to $X$ exists. We are done by Theorems \ref{thm:B} and  \ref{thm:A.1}.
\end{proof}

Let $k$ be a field. Recall from \cite{HRS2} that a finite dimensional $k$-algebra $A$ is \emph{piecewise hereditary} provided that the bounded derived category $\mathbf{D}^b(A\mbox{-mod})$ of the module category is triangle equivalent to $\mathbf{D}^b(\mathcal{H})$ for a
hereditary abelian category $\mathcal{H}$. If $k$ is algebraically closed, such a hereditary abelian category $\mathcal{H}$ is derived equivalent to the module category over a path algebra or the category of coherent sheaves on a weighted projective line; see \cite{Hap2}.

It is well known that $\mathbf{D}^b(A\mbox{-mod})$ is an algebraical triangulated category. Theorems \ref{thm:B} and  \ref{thm:A.1}  yield the following characterization of piecewise hereditary algebras.

\begin{cor}
Let $A$ be a finite dimensional connected $k$-algebra. Then the following conditions are equivalent:
\begin{enumerate}
\item The algebra $A$ is piecewise hereditary.
\item For any indecomposable object $X$ in $\mathbf{D}^b(A\mbox{-{\rm mod}})$, there is no path from $X[1]$ to $X$.
\item There exists an indecomposable object $X$ in $\mathbf{D}^b(A\mbox{-{\rm mod}})$ with no path from $X[1]$ to $X$. \hfill $\square$
\end{enumerate}
\end{cor}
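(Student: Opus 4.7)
The plan is to reduce the corollary to Theorem \ref{thm:B} and Theorem \ref{thm:A.1} applied to $\mathcal{D}=\mathbf{D}^b(A\mbox{-mod})$. Two preliminary facts are needed to license this reduction. First, because $A$ is a finite-dimensional algebra over $k$, the module category $A\mbox{-mod}$ is Krull–Schmidt, and a standard argument via semiperfect endomorphism rings shows that $\mathbf{D}^b(A\mbox{-mod})$ is Krull–Schmidt as well; in particular every object is a finite direct sum of indecomposables, which is the standing assumption of Section 4. Second, the paragraph preceding the corollary records that $\mathcal{D}$ is algebraical, so Theorem \ref{thm:A.1} is available.

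Next I would verify that the connectedness of $A$ forces $\mathcal{D}$ to be a block. Suppose to the contrary that $\mathcal{D}=\mathcal{D}_1\times\mathcal{D}_2$ is a nontrivial decomposition into triangulated subcategories. The subcategories $\mathcal{D}^{\leq 0}$ and $\mathcal{D}^{\geq 0}$ of the canonical $t$-structure are defined by Hom-vanishing and hence closed under direct summands, so the heart $A\mbox{-mod}$ is closed under summands in $\mathcal{D}$. Each module $M$ therefore decomposes as $M=M_1\oplus M_2$ with $M_i\in \mathcal{D}_i\cap A\mbox{-mod}$, and the vanishing ${\rm Hom}_\mathcal{D}(\mathcal{D}_1,\mathcal{D}_2)=0={\rm Hom}_\mathcal{D}(\mathcal{D}_2,\mathcal{D}_1)$ makes this assignment functorial, yielding a product decomposition $A\mbox{-mod}=\mathcal{A}_1\times\mathcal{A}_2$ of abelian categories. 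Applying this to the regular module ${}_A A$ produces a pair of orthogonal central idempotents $e_1,e_2\in A$ with $e_1+e_2=1$ and both nonzero (since both factors $\mathcal{D}_i$ contain nonzero objects, whose cohomologies lie in $\mathcal{A}_i$), contradicting the connectedness of $A$.

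Having established that $\mathcal{D}$ is a block, Theorem \ref{thm:B} directly yields the equivalence of (2), (3), and the property that $\mathcal{D}$ is hereditary in the paper's sense. Since $\mathcal{D}$ is algebraical, Theorem \ref{thm:A.1} promotes this to a triangle equivalence $\mathcal{D}\simeq\mathbf{D}^b(\mathcal{H})$ for some hereditary abelian category $\mathcal{H}$, which is precisely the definition of $A$ being piecewise hereditary; conversely, if $A$ is piecewise hereditary then $\mathcal{D}$ is hereditary by definition and Theorem \ref{thm:B} delivers (2) and (3). The main obstacle is the block verification: one must ensure that a categorical decomposition of $\mathcal{D}$ restricts to a product decomposition of $A\mbox{-mod}$, for which both the closure of the heart under direct summands and the vanishing of Hom in all degrees between the factors $\mathcal{D}_1,\mathcal{D}_2$ are essential.
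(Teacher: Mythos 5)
Your proposal is correct and follows the same route as the paper: invoke Theorems \ref{thm:B} and \ref{thm:A.1} for $\mathcal{D}=\mathbf{D}^b(A\mbox{-mod})$. The paper leaves implicit the verification that $\mathcal{D}$ is a block when $A$ is connected; you have spelled this out, and your argument is sound. The one point you gloss over is the claim that a nonzero object of $\mathcal{D}_i$ has some nonzero cohomology landing in $\mathcal{A}_i=\mathcal{D}_i\cap A\mbox{-mod}$: this does hold, because if $X\in\mathcal{D}_1$ is nonzero and $H^n(X)\neq 0$, write $H^n(X)=M_1\oplus M_2$ with $M_i\in\mathcal{A}_i$; the nonzero natural morphism $X\to\tau_{\geq n}X\simeq H^n(X)[-n]$ must have zero component into $M_2[-n]\in\mathcal{D}_2$, and applying $H^n$ (which turns $X\to\tau_{\geq n}X$ into an isomorphism) forces $M_2=0$. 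Alternatively one can argue directly: a nonzero $X\in\mathcal{D}_1$ admits a nonzero morphism $A[-n]\to X$ for some $n$, and since the component from $P_2[-n]$ vanishes (where $A=P_1\oplus P_2$, $P_i\in\mathcal{A}_i$), one gets $P_1\neq 0$. Either way the gap is cosmetic; the proof is complete and matches the intent of the paper's terse ``Theorems \ref{thm:B} and \ref{thm:A.1} yield.''
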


We mention that in \cite{HZ}, the characterization of piecewise hereditary algebras in terms of finite strong global dimension relies on the above result.

 We observe the following immediate consequence of  combining Proposition \ref{prop:chara} and \cite[Theorem]{HR}.

\begin{cor}
Let $A$ be a finite dimensional connected $k$-algebra. Then  $\mathbf{D}^b(A\mbox{-{\rm mod}})$ contains a directing object if and only  if  $A$ is derived equivalent to a finite dimensional hereditary algebra. \hfill $\square$
\end{cor}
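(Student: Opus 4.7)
The plan is to combine Proposition \ref{prop:chara} with the cited theorem of \cite{HR}. First I would record that $\mathbf{D}^b(A\text{-}\mathrm{mod})$ is algebraical (standard, and mentioned in the paragraph preceding the previous corollary) and, because $A$ is connected, is a block: any nontrivial decomposition $\mathbf{D}^b(A\text{-}\mathrm{mod})=\mathcal{D}_1\times \mathcal{D}_2$ as triangulated categories would split the unit $A$ into a nontrivial central idempotent, contradicting connectedness. Thus Proposition \ref{prop:chara} applies to $\mathcal{D}=\mathbf{D}^b(A\text{-}\mathrm{mod})$.

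For the ``only if'' direction, suppose $\mathbf{D}^b(A\text{-}\mathrm{mod})$ contains a directing object. Proposition \ref{prop:chara} then supplies a triangle equivalence $\mathbf{D}^b(\mathcal{H})\stackrel{\sim}{\to}\mathbf{D}^b(A\text{-}\mathrm{mod})$ with $\mathcal{H}$ a hereditary abelian category containing a directing object. Applying \cite[Theorem]{HR}, such an $\mathcal{H}$ is derived equivalent to $H\text{-}\mathrm{mod}$ for a finite dimensional hereditary algebra $H$; composing the two derived equivalences proves that $A$ is derived equivalent to $H$.

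For the ``if'' direction, assume there is a triangle equivalence $\mathbf{D}^b(A\text{-}\mathrm{mod})\simeq \mathbf{D}^b(H\text{-}\mathrm{mod})$ for some finite dimensional hereditary algebra $H$. Since $A$ is connected the left-hand side is a block, and hence so is the right-hand side; therefore $H$ may be taken connected as well. In $H\text{-}\mathrm{mod}$ pick an indecomposable projective $P$ corresponding to a source of the Ext-quiver of $H$: there is no proper path of positive length from $P$ to itself in $H\text{-}\mathrm{mod}$, so by the observation recorded just before Proposition \ref{prop:chara} (directing in $\mathcal{H}$ equals directing in $\mathbf{D}^b(\mathcal{H})$ for hereditary $\mathcal{H}$), the object $P$ is directing in $\mathbf{D}^b(H\text{-}\mathrm{mod})$. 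Transporting $P$ across the triangle equivalence (which preserves indecomposability and paths, hence directing objects) produces a directing object in $\mathbf{D}^b(A\text{-}\mathrm{mod})$.

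The only point that requires real input is the forward direction, where everything rests on \cite[Theorem]{HR}; the remaining verifications (connectedness implies block, existence of a directing indecomposable in $H\text{-}\mathrm{mod}$, invariance of ``directing'' under triangle equivalence) are either already argued in the paper or follow directly from the definition of a path.
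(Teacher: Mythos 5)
Your proposal is correct and follows essentially the same route the paper intends: reduce to Proposition~\ref{prop:chara} (after noting that $\mathbf{D}^b(A\text{-}\mathrm{mod})$ is algebraical and, by connectedness of $A$, a block) and then invoke \cite[Theorem]{HR}. The paper leaves the corollary as an immediate consequence; you merely spell out the routine verifications, including for the ``if'' direction the standard fact that a connected hereditary algebra has a directing indecomposable (e.g.\ a preprojective module) together with the paper's remark that ``directing in $\mathcal{H}$'' equals ``directing in $\mathbf{D}^b(\mathcal{H})$'' for hereditary $\mathcal{H}$ and the invariance of directing objects under triangle equivalences.
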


\vskip 10pt

\noindent {\bf Historical  Remarks and Acknowledgements}\quad  This paper combines two different investigations. The second part of the paper
is based on the preprint [C.M. Ringel, {\em Hereditary triangulated categories},
SFB-preprint {\bf 98-107}, 1998],  which was accepted at that time by the journal
Compositio. Its aim was to provide a characterization of the bounded derived category of a
hereditary abelian category, but the corresponding result (Theorem \ref{thm:B})
dealt only with equivalences of additive categories with a shift functor
and not with triangle equivalences. This gap was pointed out by Michel Van
den Bergh, thus the author never handed in a final version for publication. The gap was recently solved for triangulated categories which are algebraical by Chen. This comprises the first part of the paper.

The authors have decided to integrate both parts into the current paper
and they want to thank Michel Van den Bergh for suggesting the algebraical
assumption in Theorem \ref{thm:A.1}. The author thank Andrew Hubery for the reference \cite{Hu}, and Chao Zhang for pointing out some misprints.

Chen is supported by National Natural Science Foundation of China (No. 11522113) and the Fundamental Research Funds for the Central Universities.

\vskip 10pt

 {\footnotesize \noindent Xiao-Wu Chen \\
Key Laboratory of Wu Wen-Tsun Mathematics, Chinese Academy of Sciences\\
School of Mathematical Sciences, University of Science and Technology of China\\
No. 96 Jinzhai Road, Hefei, Anhui Province, 230026, P. R. China.\\
E-mail: xwchen$@$mail.ustc.edu.cn.}
 \vskip 5pt

 {\footnotesize \noindent Claus Michael Ringel\\
Faculty of Mathematics, University of Bielefeld\\
D-33501, Bielefeld, Germany.\\
 E-mail:ringel$@$math.uni-bielefeld.de.}
\end{document}